\definecolor{light-gray}{gray}{0.7}
\def\@settitle{\begin{center}%
    \baselineskip14\p@\relax
    \bfseries
    \@title
  \end{center}%
}
\newcommand{\nc}{\newcommand}
\nc{\exto}[1]{\stackrel{#1}{\longrightarrow}}
\nc{\dlim}{{\mathop{\lim\limits_{\longrightarrow}\,}}}
\nc{\ilim}{{\mathop{\lim\limits_{\longleftarrow}\,}}}
\nc{\hocolim}{{\mathop{\sf hocolim}\,}}
\nc{\holim}{{\mathop{\sf holim}}}
\nc{\lan}{\big\langle}
\nc{\ran}{\big\rangle}
\nc{\kk}{{\mathsf{k}}}
\nc{\HH}{{\mathbf{H}}}
\nc{\DD}{{\mathbb{D}}}
\nc{\LL}{{\mathbb{L}}}
\nc{\PP}{{\mathbb{P}}}
\nc{\QQ}{{\mathbb{Q}}}
\nc{\RR}{{\mathbb{R}}}
\nc{\ZZ}{{\mathbb{Z}}}
\nc{\CA}{{\mathcal{A}}}
\nc{\CB}{{\mathcal{B}}}
\nc{\CC}{{\mathcal{C}}}
\nc{\D}{{\mathcal{D}}}
\nc{\CE}{{\mathcal{E}}}
\nc{\CF}{{\mathcal{F}}}
\nc{\CG}{{\mathcal{G}}}
\nc{\CH}{{\mathcal{H}}}
\nc{\CL}{{\mathcal{L}}}
\nc{\CM}{{\mathcal{M}}}
\nc{\CN}{{\mathcal{N}}}
\nc{\CO}{{\mathcal{O}}}
\nc{\CQ}{{\mathcal{Q}}}
\nc{\CR}{{\mathcal{R}}}
\nc{\CS}{{\mathcal{S}}}
\nc{\CT}{{\mathcal{T}}}
\nc{\CU}{{\mathcal{U}}}
\nc{\CV}{{\mathcal{V}}}
\nc{\CW}{{\mathcal{W}}}
\nc{\CX}{{\mathcal{X}}}
\nc{\CY}{{\mathcal{Y}}}
\nc{\CMo}{{\mathcal{M}^\circ}}
\nc{\Co}{{{C}^\circ}}
\nc{\BY}{{\overline{Y}}}
\nc{\BYD}{{\overline{Y}{}^{|D|}}}
\nc{\OZ}{{\overline{Z}}}
\nc{\bg}{{\bar{g}}}
\nc{\bq}{{\mathbf{q}}}
\nc{\BD}{{\mathbf{D}}}
\nc{\BG}{{\mathbf{G}}}
\nc{\BM}{{\mathbf{M}}}
\nc{\BP}{{\mathbf{P}}}
\nc{\BZ}{{\mathbf{Z}}}
\nc{\BPr}{{\mathsf{P}}}
\nc{\BL}{{\mathbf{L}}}
\nc{\BR}{{\mathbf{R}}}
\nc{\BRO}[1]{{{\mathbf{R}}^{\circ}_{#1}}}
\nc{\BRD}[1]{{{\mathbf{R}}^{|D|}_{#1}}}
\nc{\BRP}[1]{{{\mathbf{R}}^{1}_{#1}}}
\nc{\BRTP}[1]{{{\mathbf{\tilde{R}}}{}^{1}_{#1}}}
\nc{\BS}{{\mathbf{S}}}
\nc{\BMS}{{{\mathbf{M}}^{{s}}}}
\nc{\BMSS}{{{\mathbf{M}}^{{ss}}}}
\nc{\BMZ}{{\mathbf{M}^{\circ}}}
\nc{\BCL}{{\mathbf{L}}}
\nc{\PCC}{{{}^\perp\CC}}
\nc{\Cl}{{\mathsf{Cliff}}}
\nc{\Clev}{{\mathop{\mathsf{Cliff}}^{\circ}}}
\nc{\FA}{{\mathfrak{A}}}
\nc{\FB}{{\mathfrak{B}}}
\nc{\fa}{{\mathfrak{a}}}
\nc{\fb}{{\mathfrak{b}}}
\nc{\fg}{{\mathfrak{g}}}
\nc{\fn}{{\mathfrak{n}}}
\nc{\fp}{{\mathfrak{p}}}
\nc{\FD}{{\mathfrak{D}}}
\nc{\FE}{{\mathfrak{E}}}
\nc{\FL}{{\mathfrak{L}}}
\nc{\FM}{{\mathfrak{M}}}
\nc{\FS}{{\mathsf{S}}}
\nc{\sfc}{{\mathsf{c}}}
\nc{\sfch}{{\mathsf{ch}}}
\nc{\sfh}{{\mathsf{h}}}
\nc{\SK}{{\mathsf{K}}}
\nc{\SM}{{\mathsf{M}}}
\nc{\SO}{{\mathsf{O}}}
\nc{\SQ}{{\mathsf{Q}}}
\nc{\SPV}{{\mathsf{S}^+\mathsf{V}}}
\nc{\SMV}{{\mathsf{S}^-\mathsf{V}}}
\nc{\SPMV}{{\mathsf{S}^\pm\mathsf{V}}}
\nc{\SX}{{S_X}}
\nc{\SY}{{S_Y}}
\nc{\phipsi}{{q}}
\nc{\eps}{\varepsilon}
\nc{\pim}{{\pi_-}}
\nc{\pip}{{\pi_+}}
\nc{\BE}{{\overline{\CE}}}
\nc{\TE}{{\tilde{\CE}}}
\nc{\TQ}{{\tilde{Q}}}
\nc{\TCF}{{\tilde{\CF}}}
\nc{\TCG}{{\tilde{\CG}}}
\nc{\TCL}{{\tilde{\CL}}}
\nc{\TF}{{\tilde{F}}}
\nc{\TW}{{\tilde{W}}}
\nc{\TCC}{{\tilde{\CC}}}
\nc{\TCX}{{\tilde{\CX}}}
\nc{\TCY}{{\tilde{\CY}}}
\nc{\TPhi}{{\tilde{\Phi}}}
\nc{\OPhi}{{\bar{\Phi}}}
\nc{\txi}{{\tilde{\xi}}}
\nc{\tp}{{\tilde{p}}}
\nc{\tq}{{\tilde{q}}}
\nc{\tzeta}{{\tilde{\zeta}}}
\nc{\tpi}{{\tilde{\pi}}}
\nc{\halpha}{{\hat{\alpha}}}
\nc{\HCA}{{\hat{\CA}}}
\nc{\HCB}{{\hat{\CB}}}
\nc{\HCC}{{\hat{\CC}}}
\nc{\HE}{{\widehat{\CE}}}
\nc{\HX}{{\hat{X}}}
\nc{\hxi}{{\hat{\xi}}}
\nc{\UH}{{\mathcal{H}}}
\nc{\TM}{{\widetilde{M}}}
\nc{\TCM}{{\widetilde{\CM}}}
\nc{\TU}{{\widetilde{U}}}
\nc{\TX}{{\widetilde{X}}}
\nc{\TY}{{\widetilde{Y}}}
\nc{\TYO}{{{\widetilde{Y}}^\circ}}
\nc{\barf}{{\bar{f}}}
\nc{\te}{{\tilde{e}}{}}
\nc{\tf}{{\tilde{f}}}
\nc{\ti}{{\tilde{\imath}}}
\nc{\tj}{{\tilde{\jmath}}}
\nc{\ty}{{\tilde{y}}}
\nc{\tphi}{{\tilde{\phi}}}
\nc{\urho}{{\underline{\rho}}}
\nc{\LRA}{\Leftrightarrow}
\nc{\RA}{\Rightarrow}
\nc{\lotimes}{\mathbin{\mathop{\otimes}\limits^{\mathbb{L}}}}
\nc{\CEnd}{\mathop{\mathcal{E}\mathit{nd}}\nolimits}
\nc{\CExt}{\mathop{\mathcal{E}\mathit{xt}}\nolimits}
\nc{\CHom}{\mathop{\mathcal{H}\mathit{om}}\nolimits}
\nc{\RH}{\mathop{{\mathsf{R}}\Gamma}\nolimits}
\nc{\RGamma}{\mathop{{\mathsf{R}}\Gamma}\nolimits}
\nc{\RHom}{\mathop{\mathsf{RHom}}\nolimits}
\nc{\RCHom}{\mathop{\mathsf{R}\mathcal{H}\mathit{om}}\nolimits}
\nc{\RG}{\mathop{\mathsf{R\Gamma}}\nolimits}
\nc{\Hom}{\mathop{\mathsf{Hom}}\nolimits}
\nc{\Ext}{\mathop{\mathsf{Ext}}\nolimits}
\nc{\End}{\mathop{\mathsf{End}}\nolimits}
\nc{\Tor}{\mathop{\mathsf{Tor}}\nolimits}
\nc{\Tordim}{\mathop{\mathsf{Tor}\text{\rm-}\mathsf{dim}}\nolimits}
\nc{\Hilb}{\mathop{\mathsf{Hilb}}\nolimits}
\nc{\Spec}{\mathop{\mathsf{Spec}}\nolimits}
\nc{\Pic}{\mathop{\mathsf{Pic}}\nolimits}
\nc{\Tr}{\mathop{\mathsf{Tr}}\nolimits}
\nc{\Cone}{\mathop{\mathsf{Cone}}\nolimits}
\nc{\Fiber}{\mathop{\mathsf{Fiber}}\nolimits}
\nc{\Ker}{\mathop{\mathsf{Ker}}\nolimits}
\nc{\Coker}{\mathop{\mathsf{Coker}}\nolimits}
\nc{\codim}{\mathop{\mathsf{codim}}\nolimits}
\nc{\sing}{{\mathsf{sing}}}
\nc{\supp}{\mathop{\mathsf{supp}}}
\nc{\vol}{\mathop{\mathsf{vol}}\nolimits}
\nc{\perf}{{\mathsf{perf}}}
\nc{\rank}{\mathop{\mathsf{rank}}}
\nc{\Pf}{{\mathsf{Pf}}}
\nc{\Gr}{{\mathsf{Gr}}}
\nc{\OGr}{{\mathsf{OGr}}}
\nc{\Flag}{{\mathsf{Fl}}}
\nc{\Kosz}{{\mathsf{Kosz}}}
\nc{\LGr}{{\mathsf{LGr}}}
\nc{\GTGr}{{\mathsf{G_2Gr}}}
\nc{\GTF}{{\mathsf{G_2F}}}
\nc{\OF}{{\mathsf{OF}}}
\nc{\Fl}{{\mathsf{Fl}}}
\nc{\Bl}{{\mathsf{Bl}}}
\nc{\GL}{{\mathsf{GL}}}
\nc{\PGL}{{\mathsf{PGL}}}
\nc{\SL}{{\mathsf{SL}}}
\nc{\SP}{{\mathsf{Sp}}}
\nc{\Spin}{{\mathsf{Spin}}}
\nc{\Tot}{{\mathsf{Tot}}}
\nc{\ev}{{\mathsf{ev}}}
\nc{\od}{{\mathsf{odd}}}
\nc{\coev}{{\mathsf{coev}}}
\nc{\id}{{\mathsf{id}}}
\nc{\opp}{{\mathsf{opp}}}
\nc{\PS}{{{\PP^3}}}
\nc{\Qu}{{{Q^3}}}
\nc{\tdim}{\mathop{\Tor\dim}}
\nc{\ecart}{{\fbox{$\scriptstyle\mathsf{EC}$}}}
\nc{\ad}{{\mathop{\mathsf ad}}}
\nc{\sg}{{\mathop{\mathsf sg}}}
\nc{\hf}{{\mathop{\mathsf hf}}}
\nc{\gr}{{\mathop{\mathsf gr}}}
\nc{\qgr}{{\mathop{\mathsf qgr}}}
\nc{\Coh}{{\mathop{\mathsf Coh}}}
\nc{\Ab}{{\mathop{\mathcal{A}\mathit{b}}}}
\nc{\Ccoh}{{\mathop{\mathsf Ccoh}}}
\nc{\Qcoh}{{\mathop{\mathsf Qcoh}}}
\nc{\At}{{\mathop{\mathsf{At}}\nolimits}}
\nc{\tra}{{\mathsf{T}}}
\nc{\fsl}{{\mathfrak{sl}}}
\nc{\fso}{{\mathfrak{so}}}
\nc{\fgl}{{\mathfrak{gl}}}
\nc{\AAV}{{\mathcal{AAV}}}
\nc{\Rep}{{\mathsf{Rep}}}
\nc{\git}{{/\!\!/\!{}_\chi}}
\nc{\HOH}{{\mathsf H\mathsf H}}
\nc{\HHE}{{\mathsf H\mathsf E}}
\def\presuper#1#2%
\nc{\You}{{\mathsf{Y}}}
\nc{\lort}[1]{\vphantom{#1}^\perp{}#1}
\nc{\trsp}[1]{\presuper{t}{#1}}
\nc{\dcut}[2]{#1^{(#2)}}
\nc{\ncut}[2]{b_{#1}^{(#2)}}
\nc{\sG}{\mathsf{G}}
\nc{\sP}{\mathsf{P}}
\nc{\sU}{\mathsf{U}}
\nc{\sL}{\mathsf{L}}
\nc{\CK}{\mathcal{K}}
\DeclarePairedDelimiter\ceil{\lceil}{\rceil}
\DeclarePairedDelimiter\floor{\lfloor}{\rfloor}
\theoremstyle{plain}
\newtheorem{theorem}{Theorem}[section]
\newtheorem{conjecture}[theorem]{Conjecture}
\newtheorem{lemma}[theorem]{Lemma}
\newtheorem{proposition}[theorem]{Proposition}
\newtheorem{corollary}[theorem]{Corollary}
\theoremstyle{definition}
\newtheorem{definition}[theorem]{Definition}
\theoremstyle{remark}
\newtheorem{remark}[theorem]{Remark}
\newtheorem{example}[theorem]{Example}
\title{On a conjecture of Kuznetsov and Polishchuk}
\author{Anton Fonarev}
\address{\sloppy
\parbox{0.9\textwidth}{
		Laboratory of Algebraic Geometry, SU-HSE,
		7 Vavilova Str., Moscow 117312 Russia
\hfill
}\bigskip}
\thanks{This work is supported by the RSF under a grant 14-50-00005.}
\email{avfonarev@mi.ras.ru}
\date{}
\begin{document}

\begin{abstract}
  We prove a conjecture by A.~Kuznetsov and A.~Polishchuk on the existence of some particular
  full exceptional collections in bounded derived categories of coherent sheaves on
  Grassmannian varieties.
\end{abstract}

\maketitle

\section{Introduction}
The study of derived categories of coherent sheaves on algebraic varieties is
one of the main branches of contemporary algebraic geometry. In particular, one would
often like to construct full exceptional collections that in are possession enjoyable
properties. These questions go back to seminal papers by A.~Beilinson~\cite{Bei}
and M.~Kapranov~\cite{Kap}, where full strong exceptional collections were constructed
in bounded derived categories of projective spaces and general Grassmannians
respectively.

Since Kapranov's result, a~long standing conjecture predicts that the bounded derived
category of coherent sheaves on a~rational homogeneous variety admits a full exceptional collection.
In their recent work~\cite{KuzPol} A.~Kuznetsov and A.~Polishchuk proposed a general
method of constructing exceptional collections on homogeneous spaces and used it
to find exceptional collections of maximal length in bounded derived categories of
generalized Grassmannians of type $BCD$. Conjectural fullness of these collections remains
an important open question.

The present paper concentrates on proving a conjecture stated in~\cite{KuzPol}.
In particular, the authors predicted existence of a whole set of full exceptional
collections of a particular form on classical Grassmannian (of type $A$). This is
an interesting question in its own right as it gives a particular way to incrementally mutate
Kapranov's collection into its dual. Besides that, we believe that the methods
developed along the may turn out to be useful for the study of Kuznetsov and Polishchuk's construction.
Let us now state the conjecture in question.

Throughout the paper we fix a pair of integers $0<k<n$. The main object of study is the Grassmannian
$X=\Gr(k,V)$ of $k$-dimensional subspaces in a fixed $n$-dimensional vector space $V$ over
an~algebraically closed field of characteristic zero.

Consider a strongly increasing continuous function $\Gamma:[0,n-k]\to\RR$ with boundary conditions
$\Gamma(0)=0$ and $\Gamma(n-k)=k$. We prefer to think about such functions as monotonous paths
going from the lower left corner of some rectangle of width $n-k$ and height $k$ to the upper
right one. Let $p_i=(x_i,y_i)$, $i=0,\ldots,l(\Gamma)$ be the points on the graph of $\Gamma$
with at least one integer coordinate ordered from left to right. In particular, $p_0=(0,0)$ and $p_{l(\Gamma)}=(n-k,k)$.

Denote by $\Pi$ the rectangle $[0,n-k]\times[0,k]\subset\RR^2$.
For every point $p=(x,y)\in\Pi$ define the
\emph{block} $\Bl_p=\You_{n-k-x,y}\times\You_{k-y,x}$, where $\You_{w,h}$ is the set of all
Young diagrams inscribed in a rectangle of width $\floor{w}$ and height $\floor{h}$.
\begin{center}
\begin{tikzpicture}[scale=0.6]
    \draw (0,0) rectangle (10,6);
    \draw [dashed] (0,2.5) -- (6.5,2.5) -- (6.5,6);
    \draw [dashed] (6,0) -- (6,3) -- (10,3);
    \draw [fill=black] (6.25,2.75) circle [radius=0.04];
    \fill [white] (5.95,2.45) circle [radius=0.2];
    \node [below left] at (6.25,2.75) {$p$};
    \draw (0,3) -- (6,3) -- (6,6);
    \draw (6.5,0) -- (6.5,2.5) -- (10,2.5);
    
    \draw (-0.3,3) -- (-0.3,6);
    \draw (-0.4,3) -- (-0.2,3);
    \draw (-0.4,6) -- (-0.2,6);
    \node [left] at (-0.3,4.5) {$k-\ceil{y}$};
    
    \draw (0,6.3) -- (6,6.3);
    \draw (0,6.2) -- (0,6.4);
    \draw (6,6.2) -- (6,6.4);
    \node [above] at (3,6.3) {$\floor{x}$};

    \draw (10.3,0) -- (10.3,2.5);
    \draw (10.4,2.5) -- (10.2,2.5);
    \draw (10.4,0) -- (10.2,0);
    \node [right] at (10.3,1.25) {$\floor{y}$};

    \draw (6.5,-0.3) -- (10,-0.3);
    \draw (6.5,-0.2) -- (6.5,-0.4);
    \draw (10,-0.2) -- (10,-0.4);
    \node [below] at (8.25,-0.3) {$n-k-\ceil{x}$};
    
    \draw [fill=light-gray] (0,3) -- (0,6) -- (5.5,6) -- (5.5,5.5)
    -- (5,5.5) -- (5,5) -- (3.5,5) -- (3.5,4.5) -- (1.5,4.5) --
    (1.5,3.5) -- (1,3.5) -- (1,3) -- (0,3);
    \node at (1,5) {$\trsp{\mu}$};
    
    \draw [fill=light-gray] (6.5,0) -- (6.5,2.5) -- (10,2.5) -- (10,2)
    -- (9.5,2) -- (9.5,1.5) -- (8.5,1.5) -- (8.5,0.5) -- (7,0.5)
    -- (7,0) -- (6.5,0);
    \node at (7.5,1.5) {$\lambda$};
\end{tikzpicture}
\end{center}
To every block we associate the subcategory
\[
    \CB_p=\left<\Sigma^\lambda\CU^*\otimes \Sigma^\mu(V/\CU)\ \mid\ (\lambda,\mu)\in\Bl_p \right> \subset D^b(X),
\]
where $D^b(X)$ is the bounded derived category of coherent sheaves on $X$,
$\CU$ is the tautological subbundle of rank~$k$, and $\Sigma^\alpha$ is the Schur functor
corresponding to the diagram $\alpha$.

\begin{conjecture}[{\cite[Conjecture~9.8]{KuzPol}}]\label{conj}
    For every path $\Gamma$ there is a semi-orthogonal decomposition
    \begin{equation*}
        D^b(X)=\left<\CB_{p_0},\CB_{p_1}(1),\ldots,\CB_{p_{l(\Gamma)}}(l(\Gamma))\right>,
    \end{equation*}
    and every component $\CB_{p_i}(i)$ is generated by a full exceptional collection.
\end{conjecture}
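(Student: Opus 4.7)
The plan is to prove the conjecture by induction on deformations of the path $\Gamma$, starting from a canonical path for which the decomposition reduces to (a variant of) Kapranov's full exceptional collection. At the technical heart sits a Borel--Bott--Weil computation on $X=\Gr(k,V)$, applied to $\RHom$-groups of Schur bundles $\Sigma^\lambda\CU^*\otimes\Sigma^\mu(V/\CU)$.

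I would begin by verifying semi-orthogonality of the blocks $\CB_{p_i}(i)$. For $0\le i<j\le l(\Gamma)$ and admissible pairs $(\lambda,\mu)\in\Bl_{p_i}$, $(\lambda',\mu')\in\Bl_{p_j}$, the relevant $\RHom$ expands via the Littlewood--Richardson rule into a sum of terms of the form $\RGamma(X,\Sigma^\alpha\CU^*\otimes\Sigma^\beta(V/\CU)(i-j))$. I would show vanishing of each such summand by demonstrating that the shape constraints coming from the rectangles $\You_{n-k-x,y}$ and $\You_{k-y,x}$ force the associated weight to be singular (fixed by a simple reflection) after the $\rho$-shift, so Borel--Bott--Weil kills the cohomology in every degree. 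For exceptionality inside a single block $\CB_p$ with $p=(x,y)$, I would identify $\Bl_p$ with the product of Young-diagram sets attached to two smaller rectangles and recognise the generators as an external product of two Kapranov-type exceptional collections, reducing the claim to Kapranov's original theorem together with a compatibility check on how the two factors interact inside $D^b(X)$.

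The main difficulty, and the reason the conjecture is nontrivial, is \emph{fullness}. I would attack it inductively, reflecting the remark in the introduction that the conjecture realises an incremental mutation between Kapranov's collection and its dual. Two paths $\Gamma,\Gamma'$ that differ only by pushing a single lattice-square corner from below the path to above (or vice versa) produce decompositions differing by a mutation of the two consecutive blocks $\CB_{p_i}(i)$ and $\CB_{p_{i+1}}(i+1)$ across one another. The crucial technical step is an explicit identification: the right (equivalently left) mutation of one block through its neighbour recovers precisely the pair of blocks associated with the deformed path, up to twist. This identity reduces to a term-by-term matching of generators, controlled by the same Borel--Bott--Weil analysis used for semi-orthogonality, and by staircase-type Koszul resolutions relating $\Sigma^\lambda\CU^*\otimes\Sigma^\mu(V/\CU)$ to the Kapranov basis of $D^b(X)$. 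Once the mutation identity is established, I would take as base case a path for which the decomposition literally recovers Kapranov's full collection; fullness then propagates from that base path to every $\Gamma$ by the induction. Verifying the mutation identity is where I expect the bulk of the work --- and the main obstacle --- to lie, since one must track the precise combinatorial transformation of the admissible pairs $(\lambda,\mu)$ under the change of rectangles $\You_{n-k-x,y}\times\You_{k-y,x}$ at the corner being swapped.
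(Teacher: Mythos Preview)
Your overall architecture---Borel--Bott--Weil for semi-orthogonality, then a path-deformation induction anchored at Kapranov's collection for fullness---matches the paper's. But there is a genuine gap in your treatment of the blocks, and it propagates into the fullness argument.

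The generators $\Sigma^\lambda\CU^*\otimes\Sigma^\mu(V/\CU)$ are \emph{not} exceptional objects of $D^b(X)$ once both $\lambda$ and $\mu$ are nonempty. The set $\Bl_p$ factors as a product of two Young-diagram posets, but $X$ is not a product, so there is no ``external product of Kapranov collections'' to appeal to; the ``compatibility check on how the two factors interact'' is exactly where the argument fails. For a concrete instance take $p$ with $\lfloor x\rfloor=\lfloor y\rfloor=1$: the pair $((1),(1))$ yields $\CU^*\otimes(V/\CU)\cong T_X$, which has nontrivial higher self-$\Ext$ on $\Gr(k,V)$ whenever $1<k<n-1$. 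The paper resolves this by constructing, for each $(\lambda,\mu)\in\Bl_p$, a new bundle $\CE^{\lambda,\mu}$ as a pushforward from a partial flag variety (Section~\ref{sec:elm}); these $\CE^{\lambda,\mu}$ are exceptional in $D^b(X)$, generate the same subcategory as the naive generators, and in the equivariant category form the dual collection to $\langle\CU^\alpha\otimes(V/\CU)^{-\beta}\rangle$. Without this replacement the statement ``every component $\CB_{p_i}(i)$ is generated by a full exceptional collection'' is simply unproved.

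This gap then feeds into the deformation step. Your plan is to identify the mutation of one block through its neighbour with the pair of blocks for the deformed path. In the paper this is accomplished not by computing mutations abstractly but by exhibiting explicit exact sequences---generalized \emph{staircase complexes} (Theorems~\ref{thm:stcl} and~\ref{thm:stcm})---whose terms are the bundles $\CE^{\lambda,\mu}$ themselves. These complexes resolve an $\CE^{\lambda,\mu}$ with $\lambda_1$ (or $\mu_1$) maximal in terms of $\CE$-bundles lying in two adjacent blocks, and this is what drives the equality $\langle\CB_0,\CB_1(1)\rangle=\langle\CB_0',\CB_1'(1)\rangle$ under a corner swap. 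Your ``staircase-type Koszul resolutions relating $\Sigma^\lambda\CU^*\otimes\Sigma^\mu(V/\CU)$ to the Kapranov basis'' is pointed in the right direction, but the resolutions needed are two-parameter and live among the $\CE^{\lambda,\mu}$, not among the naive generators; building them is where Section~3 pays off.
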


The paper is organized as follows. In the second section we recollect all the preliminary
statements and notation. We study some exceptional equivariant vector bundles on Grassmannians
in the third section and show that every semi-orthogonal component $\CB_p$ is generated by a full
exceptional collection consisting of equivariant vector bundles of that form.
A class of exact sequences, called \emph{staircase complexes,} is constructed in the fourth section.
Finally, in the fifth section we use staircase complexes to give a full proof of Conjecture~\ref{conj}.

The main results of this paper were announced in~\cite{Fon2}.

\subsection*{Acknowledgements}
I would like to thank A.~Kuznetsov for his eternal patience and delicate supervision,
S.~Belcher for correcting a total of fifty six English language mistakes and misprints in the draft,
and the Max Planck Institute for Mathematics in Bonn for hospitality.

\section{Preliminaries}

\subsection{Exceptional collections and semi-orthogonal decompositions}
Let $\CT$ be a linear triangulated category over an algebraically closed field $\kk$ of characteristic zero.

\begin{definition}
    An ordered collection $\CA_1,\CA_2,\ldots,\CA_t\subseteq\CT$ of full triangulated subcategories
    \emph{semi-orthogonal decomposition,} \mbox{if} the following two properties hold:
    \begin{enumerate}
        \item $\Hom_\CT(F,E) = 0$ for any objects $E\in\CA_i$, $F\in\CA_j$, such that $1\leq i<j\leq t$;
        \item the smallest full triangulated subcategory containing all $\CA_i$ coincides with $\CT$.
    \end{enumerate}
    A semi-orthogonal decomposition with \emph{components} $\CA_i$ is denoted by $\CT=\left<\CA_1,\CA_2,\ldots,\CA_t\right>$.
\end{definition}

\begin{definition}\label{def:eo}
    An object $E\in\CT$ is called \emph{exceptional,} if
    \[
        \Hom(E,E[t]) = \begin{cases}
            \kk, & t=0,\\
            0,   & t\neq 0.
        \end{cases}
    \]
\end{definition}

\begin{definition}\label{def:ec}
    A sequence of exceptional objects $E_1,E_2,\ldots,E_t$ is called an
    \emph{exceptional collection,} if for all $1\leq i<j\leq t$ and all $s\in\ZZ$ one has
    $\Hom(E_j,E_i[s])=0$. An exceptional collection is \emph{full,} if $\CT=\left<E_1,E_2,\ldots,E_t\right>$.
\end{definition}

There is a braid group action on the set of full exceptional collections (by means of so called mutations).
Particularly interesting mutations include dual collections, that we characterize by a cohomological
criterion. Recall that in a triangulated category $\CT$ one defines $\Ext^k(X,Y)=\Hom(X,Y[k])$.

\begin{proposition}[Gorodentsev,~\cite{Rudakov}]
    Let $\CT=\left<E_1,E_2,\ldots,E_t\right>$ be a full exceptional collection.
    A collection of objects $(F_{t}, F_{t-1},\ldots,F_1)$, such that
    \[
        \Ext^\bullet(F_i,E_j) = \begin{cases}
            \kk, & i=j\\
            0,   & i\neq j
        \end{cases}
    \]
    also forms a full exception collection, called the \emph{left dual collection.} Interchanging
    $F_i$ and $E_j$ in the cohomological condition we obtain the \emph{left dual collection.}
\end{proposition}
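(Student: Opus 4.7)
The plan is to construct the dual collection via iterated right mutations and then to verify the claimed properties directly. Recall that the right mutation $\mathsf{R}_E(X)$ through an exceptional object $E$ is defined by the distinguished triangle
\[
    X \to \Hom^\bullet(X, E)^\vee \otimes E \to \mathsf{R}_E(X) \to X[1],
\]
and satisfies $\Hom^\bullet(\mathsf{R}_E(X), E) = 0$.

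Step one is to define $F_i := \mathsf{R}_{E_t}\mathsf{R}_{E_{t-1}} \cdots \mathsf{R}_{E_{i+1}}(E_i)$, with $F_t := E_t$, and verify the cohomological duality. The key observation is that if $\Hom^\bullet(X, E_j) = 0$ and $\Hom^\bullet(E_k, E_j) = 0$, then applying $\Hom^\bullet(-, E_j)$ to the defining triangle of $\mathsf{R}_{E_k}(X)$ yields $\Hom^\bullet(\mathsf{R}_{E_k}(X), E_j) = 0$. For $j > i$, the mutation $\mathsf{R}_{E_j}$ itself ensures this vanishing, and all subsequent mutations $\mathsf{R}_{E_k}$ with $k > j$ preserve it, since $\Hom^\bullet(E_k, E_j) = 0$ by the exceptional collection property. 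For $j \leq i$, every intermediate mutation preserves $\Hom^\bullet(-, E_j)$ by the same observation, so $\Hom^\bullet(F_i, E_j) \cong \Hom^\bullet(E_i, E_j)$, which is $0$ for $j < i$ and $\kk$ for $j = i$.

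Step two is to recognise that $(F_t, F_{t-1}, \ldots, F_1)$ is a full exceptional collection by realising it as the outcome of a sequence of elementary right mutations applied to $(E_1, \ldots, E_t)$. Indeed, mutating $E_1$ successively across $E_2, \ldots, E_t$ produces $(E_2, \ldots, E_t, F_1)$; mutating the new leftmost object $E_2$ across $E_3, \ldots, E_t$ produces $(E_3, \ldots, E_t, F_2, F_1)$; and continuing in this fashion yields $(F_t, F_{t-1}, \ldots, F_1)$. Since each elementary right mutation preserves the property of being a full exceptional collection---exceptionality of the new object is the standard mutation lemma, and the generated triangulated subcategory is unchanged by the defining triangle---fullness and exceptionality of the dual collection follow.

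The main technical subtlety lies in confirming that the iterated mutation procedure genuinely yields the $F_i$ in the prescribed positions and that the cohomological vanishings propagate correctly through each intermediate step. Both facts reduce to repeated application of the long exact sequences coming from the defining triangles, together with the vanishings $\Hom^\bullet(E_k, E_j) = 0$ for $k > j$ already present in the original exceptional collection, so no new techniques beyond the definitions of exceptionality and mutation are needed.
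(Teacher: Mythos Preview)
The paper does not prove this proposition; it is quoted as a known result of Gorodentsev with a citation to the Rudakov volume, and no argument is given. Your approach via iterated right mutations is precisely the standard proof found in that reference: one constructs $F_i$ by mutating $E_i$ all the way to the right, verifies the duality relations $\Ext^\bullet(F_i,E_j)=\delta_{ij}\kk$ from the defining triangles together with the vanishings $\Hom^\bullet(E_k,E_j)=0$ for $k>j$, and uses that each elementary mutation preserves full exceptionality to conclude that $(F_t,\ldots,F_1)$ is again a full exceptional collection. The argument is correct.

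One small point worth making explicit: the proposition, as phrased, can be read as asserting that \emph{any} collection $(F_t,\ldots,F_1)$ satisfying the cohomological condition is a full exceptional collection, whereas you have exhibited a specific such collection (built by mutations) and shown it has both properties. To close this logical gap you should note that the condition $\Ext^\bullet(F_i,E_j)=\delta_{ij}\kk$ determines each $F_i$ up to shift: since $\CT=\langle E_1,\ldots,E_t\rangle$, every object has a canonical filtration with $j$-th graded piece lying in $\langle E_j\rangle$, and the $\Ext$ conditions force all but one of these pieces to vanish and pin down the remaining one. Hence any collection satisfying the hypothesis coincides (up to shifts) with the one you constructed.
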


\subsection{Young diagrams}
One usually identifies Young diagrams with non-increasing positive integer sequences of finite length.
However, it will be more convenient for us to think of them as dominant weights of general linear
groups. The latter correspond to arbitrary non-increasing integer sequences of finite length. Thus,
a Young diagram throughout this paper may have rows of zero or negative length. We suggest one thinks
of those rows as consisting of boxes that are placed to the left of a fixed vertical axis, depicting
zero.
 
Let $\You_{w,h}$ denote the set of Young diagrams inscribed in a rectangle of width $w$ and height $h$.
One can identify $\You_{w,h}$ with the set of non-increasing integral sequences $\lambda=(\lambda_1,\lambda_2,\ldots,\lambda_h)$,
such that $w\geq\lambda_1\geq\lambda_2\geq\ldots\geq\lambda_h\geq 0$.
One can also think of these diagrams as of integral paths going from the lower left corner of the
rectangle to the upper right one, moving only upwards or to the right. Such paths consist of
$w$~horizontal and $h$~vertical unit segments. The latter description provides a bijection between
$\You_{w,h}$ and the set of binary sequences of length $w+h$, containing exactly $h$~``ones''.

The group $\ZZ/(w+h)\ZZ$ naturally acts on the set of binary sequences of length $w+h$ by
cyclic shifts, preserving the number of zeros and ones:
\[
    g:a_1a_2\ldots a_n\ \mapsto\ a_na_1a_2\ldots a_{n-1},
\]
where $a_i\in\{0,1\}$ and $g\in \ZZ/(w+h)\ZZ$ is a generator.

The induced action on $\You_{w,h}$ will be called \emph{cyclic,} the image of $\lambda\in\You_{w,h}$
under the generator $g$ denoted by $\lambda\{1\}$ and called the \emph{(cyclic) shift} of $\lambda$.
In terms of Young diagrams $\lambda\{1\}$ can be defined as follows:
\[
\lambda\{1\} = \begin{cases}
    (\lambda_1+1,\lambda_2+1,\ldots,\lambda_h+1), & \text{if } \lambda_1<w, \\
    (\lambda_2,\lambda_3,\ldots,\lambda_h,0), & \text{if } \lambda_1=w.
\end{cases}
\]
The image of $\lambda$ under the action of $g^d$ will be denoted by $\lambda\{d\}$.

Additionally, the group $\ZZ$ acts naturally on the set of all Young diagrams with $h$ rows
by the rule
\[
    \lambda(t)=(\lambda_1+t,\lambda_2+t,\ldots,\lambda_h+t).
\]
We call the diagram $\lambda(t)$ the \emph{twist} of $\lambda$ by $t$.

Let $\lambda\in\You_{w,h}$ be a diagram of maximal width, i.e. $\lambda_1=w$.
For all $i=1,\ldots, w$ define $\dcut{\lambda}{i}$ according to the rule
\[
    \begin{array}{lcl}
    \trsp{\dcut{\lambda}{1}} & = & (\trsp{\lambda}_1, \trsp{\lambda}_2, \ldots, \trsp{\lambda}_{w_\lambda-1}, 0), \\
    \trsp{\dcut{\lambda}{i}} & = & (\trsp{\lambda}_1, \ldots, \trsp{\lambda}_{w_\lambda-i}, \trsp{\lambda}_{w_\lambda-i+2}-1,\ldots, \trsp{\lambda}_{w_\lambda}-1, 0), \\
    \trsp{\dcut{\lambda}{w_\lambda}} & = & (\trsp{\lambda}_2-1, \trsp{\lambda}_3-1,\ldots, \trsp{\lambda}_{w_\lambda}-1, 0). \\
    \end{array}
\]
One gets a sequence of strict inclusions $\lambda\supset\dcut{\lambda}{1}\supset\dcut{\lambda}{2}\supset\ldots\supset\dcut{\lambda}{w}$.
Also, define the numbers $\ncut{\lambda}{i} = |\lambda/\dcut{\lambda}{i}|$, where
$|\alpha/\beta|=|\alpha|-|\beta|$ denotes the number of boxes in a skew diagram.
The diagrams $\dcut{\lambda}{i}$ will be called \emph{band cuts} of $\lambda$.

Band cuts are easy to describe in terms of binary sequences as well. Let $a_1a_2\ldots a_{w+h}$ be
the binary sequence corresponding to $\lambda$. Let $z_1>z_2>\ldots>z_w$ be the indices
of zero terms (recall that there are exactly $w$ of those). The condition $\lambda_1=w$
is equivalent to $a_{w+h}=1$. Now, the diagram $\dcut{\lambda}{i}$ corresponds to the binary sequence
\[
    a_1 a_2\ldots a_{z_i-1} 1 a_{z_i+1}\ldots a_{w+h-1}0.
\]

Informally speaking, in order to find $\dcut{\lambda}{i}$, one should take the inner band of unit width
along the border of $\lambda$ (this band is nothing but the skew diagrams $\lambda/\dcut{\lambda}{w}$)
and cut $i$ rightmost columns of the band out of $\lambda$. The number of boxes one removes
coincides with $\ncut{\lambda}{i}$.

\begin{example}
    Let $\lambda=(4, 4, 2)\in\You_{4,3}$. The diagram $\dcut{\lambda}{i}$ can be obtained by cutting
    out boxes with labels that are less than or equal to $i$, on the following picture.
    \begin{center}
        \ytableausetup{centertableaux}
        \ytableaushort {\none \none \none 1,\none 321, 43} * {4,4,2}
    \end{center}
\end{example}

\subsection{Schur functors}
Let $\lambda$ be a Young diagram with $k$ rows, and let
$\CU$ be a vector bundle of rank $k$ on an algebraic variety $X$. Let $P$ denote the principal
$\GL(k)$-bundle associated to $\CU$.
Consider the irreducible $\GL(k)$-representation $V^{\lambda}$ with highest weight $\lambda$
Then $\Sigma^\lambda\CU=V^\lambda\times_{\GL(k)} P$.
Throughout the paper we adopt a shorthand notation $\CU^\lambda$ for $\Sigma^\lambda\CU$.
By definition $\CU^{-\lambda}=\left(\CU^*\right)^\lambda$.
Analogously, $\CU^{\lambda/\mu}$ will denote the result of applying the skew Schur functor to $\CU$.
One can learn about skew Schur functors from~\cite{Weyman_Art}.

The Littlewood--Richardson rule allows decomposition of tensor product of vector bundles $\CU^\alpha$ and
$\CU^\beta$ into a direct sum with summands of the form $\CU^\gamma$, in other words, compute
multiplicities $m^{\gamma}_{\alpha,\beta}$ in the decomposition
\[
    \CU^\alpha\otimes\CU^\beta = \bigoplus_{\gamma}m^{\gamma}_{\alpha,\beta}\,\CU^\gamma,
\]
where $m^{\gamma}_{\alpha,\beta}\,\CU^\gamma$ denotes the direct sum of $m^{\gamma}_{\alpha,\beta}$
copies of $\CU^\gamma$.

Littlewood--Richardson coefficient also appear when one tries to express skew Schur functors in
terms of usual ones.

\begin{proposition}[{\cite[Proposition~2.3.6]{Weyman_Book}}]\label{pr:skewsh}
    Let $\alpha\subset\beta$ be a pair of plain old Young diagrams. Then
    \[
        \CU^{\beta/\alpha}=\bigoplus_{\gamma}m^{\beta}_{\alpha,\gamma}\,\CU^\gamma.
    \]
\end{proposition}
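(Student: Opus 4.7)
The plan is to reduce the statement to an identity in the ring of symmetric functions and transport it back to vector bundles via the associated bundle construction. I will work throughout at the level of the structure group $\GL(k)$, where $k$ is the rank of $\CU$.

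First I would observe that both sides of the conjectural identity are vector bundles obtained from the principal $\GL(k)$-bundle $P$ associated to $\CU$: the left hand side is associated to the skew Schur module $V^{\beta/\alpha}$, and the right hand side to $\bigoplus_\gamma m^\beta_{\alpha,\gamma}\,V^\gamma$. Since the associated bundle construction $W\mapsto W\times_{\GL(k)} P$ is an exact, direct-sum-preserving functor that is faithful on isomorphism classes of rational representations, it suffices to prove
\[
    V^{\beta/\alpha} \cong \bigoplus_\gamma m^\beta_{\alpha,\gamma}\,V^\gamma
\]
as $\GL(k)$-representations.

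Next, since $\GL(k)$ is reductive and the characteristic is zero, rational representations are determined by their characters. Writing $s_\lambda$ for the character of $V^\lambda$ and $s_{\beta/\alpha}$ for the character of $V^{\beta/\alpha}$, the claim becomes the classical symmetric-function identity
\[
    s_{\beta/\alpha} = \sum_\gamma c^\beta_{\alpha,\gamma}\,s_\gamma,
\]
with $c^\beta_{\alpha,\gamma}=m^\beta_{\alpha,\gamma}$ the Littlewood--Richardson coefficients, defined by $s_\alpha\,s_\gamma = \sum_\beta c^\beta_{\alpha,\gamma}\,s_\beta$.

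To prove this identity I would invoke the Hall inner product on the ring of symmetric functions, for which the Schur functions form an orthonormal basis, together with the adjunction
\[
    \langle s_{\beta/\alpha},\,f\rangle = \langle s_\beta,\,s_\alpha f\rangle,
\]
which may be taken as the defining property of the skewing operation. Specialising $f=s_\gamma$ and inserting the Littlewood--Richardson rule yields $\langle s_{\beta/\alpha},\,s_\gamma\rangle = c^\beta_{\alpha,\gamma}$, and the desired expansion follows from orthonormality of the Schur basis. The one mildly delicate point, and essentially the only obstacle, is ensuring that the geometrically defined bundle $\CU^{\beta/\alpha}$ really is the associated bundle of the $\GL(k)$-module with character $s_{\beta/\alpha}$; this compatibility between the skew Young symmetrizer construction and the character-theoretic skew Schur polynomial is precisely the content of the definition adopted in~\cite{Weyman_Book}, and once it is granted the rest of the argument is automatic.
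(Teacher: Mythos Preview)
The paper does not supply its own proof of this proposition; it is stated with a bare citation to \cite[Proposition~2.3.6]{Weyman_Book} and used as a black box. There is therefore nothing in the paper to compare your argument against.

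That said, your proof is correct and is essentially the standard one. The reduction to $\GL(k)$-representations via the associated bundle construction is valid, and once there the identity follows from the adjunction $\langle s_{\beta/\alpha}, f\rangle = \langle s_\beta, s_\alpha f\rangle$ together with orthonormality of Schur functions. You correctly flag the one nontrivial input: that the combinatorially defined skew Schur module $V^{\beta/\alpha}$ has character $s_{\beta/\alpha}(x_1,\ldots,x_k)$. This is indeed what is established in the cited reference, so granting it is appropriate.
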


The following statement is another very useful tool when working with Schur functors.

\begin{proposition}[{\cite[Proposition~2.3.1]{Weyman_Book}}]\label{pr:schses}
    Let $\lambda$ be a Young diagram and let
    \[
        0\to \CU\to \CV\to \CF\to 0
    \]
    be a short exact sequence of vector bundles. There exists a filtration on $\CV^\lambda$
    with associated graded factors isomorphic to
    \[
        \CU^\alpha\otimes\CF^{\lambda/\alpha},
    \]
    where $\alpha$ runs over the set of subdiagrams $\alpha\subseteq\lambda$.
\end{proposition}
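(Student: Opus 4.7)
My plan is to exploit the realization of Schur functors via the symmetric group action on tensor powers, which is available in characteristic zero. Setting $n=|\lambda|$, one has a natural $S_n$-equivariant decomposition $\CV^{\otimes n}\cong\bigoplus_\mu V^\mu\otimes_\kk\Sigma^\mu\CV$, where $V^\mu$ ranges over the irreducible $S_n$-representations; equivalently, $\CV^\lambda=\Hom_{S_n}(V^\lambda,\CV^{\otimes n})$. Since $\kk[S_n]$ is semisimple, $\Hom_{S_n}(V^\lambda,-)$ is exact, so any $S_n$-equivariant filtration on $\CV^{\otimes n}$ descends to a filtration on $\CV^\lambda$ whose graded pieces can be computed term by term.

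Next, I would extract such a filtration from the short exact sequence. For each subset $S\subseteq\{1,\ldots,n\}$, let $\CW_S\hookrightarrow\CV^{\otimes n}$ denote the sub-bundle obtained by replacing $\CV$ with $\CU$ at the positions indexed by $S$, and set $F_k=\sum_{|S|=k}\CW_S$. The containments $\CW_T\subseteq\CW_S$ for $T\supseteq S$ make $F_\bullet$ a decreasing $S_n$-equivariant filtration on $\CV^{\otimes n}$; working Zariski-locally the sequence splits, and a direct check shows the graded quotients take the form
\[
F_k/F_{k+1}\;\cong\;\mathrm{Ind}^{S_n}_{S_k\times S_{n-k}}\bigl(\CU^{\otimes k}\otimes\CF^{\otimes(n-k)}\bigr),
\]
with $S_k$ permuting the $\CU$-factors and $S_{n-k}$ the $\CF$-factors. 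Applying $\Hom_{S_n}(V^\lambda,-)$ and Frobenius reciprocity, the associated graded of the induced filtration on $\CV^\lambda$ becomes
\[
\Hom_{S_k\times S_{n-k}}\bigl(\mathrm{Res}\,V^\lambda,\;\CU^{\otimes k}\otimes\CF^{\otimes(n-k)}\bigr)\;\cong\;\bigoplus_{\alpha\subset\lambda,\,|\alpha|=k}\CU^\alpha\otimes\CF^{\lambda/\alpha},
\]
where the second isomorphism combines the symmetric group branching rule $\mathrm{Res}^{S_n}_{S_k\times S_{n-k}}V^\lambda=\bigoplus_{\alpha,\beta}c^\lambda_{\alpha\beta}\,V^\alpha\boxtimes V^\beta$ with the identity $\CF^{\lambda/\alpha}=\bigoplus_\beta c^\lambda_{\alpha\beta}\,\CF^\beta$ built into the definition of the skew Schur functor. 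A standard refinement of $F_\bullet$ separating the summands parametrized by distinct subdiagrams $\alpha$ then yields the filtration asserted in the proposition.

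The principal technical point is the final identification of graded pieces: once isolated from the geometry of the bundles it reduces to standard representation theory of $S_n$, but the compatibility between the tensor filtration and the Young symmetrizers has to be handled carefully to keep $S_n$-equivariance at every step. A useful sanity check is provided by the split case $\CV=\CU\oplus\CF$, where the filtration splits globally and the formula collapses to the classical Pieri-type expansion $\Sigma^\lambda(\CU\oplus\CF)=\bigoplus_\alpha\CU^\alpha\otimes\CF^{\lambda/\alpha}$; alternatively, the general case can be deduced from this one by deforming to the split bundle and tracking ranks along the filtration.
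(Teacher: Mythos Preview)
The paper does not supply its own proof of this proposition: it is quoted verbatim from \cite[Proposition~2.3.1]{Weyman_Book} as a preliminary fact, so there is nothing to compare on the paper's side.

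Your argument is correct under the paper's standing hypothesis of characteristic zero, and it is a genuinely different route from the one in the cited reference. Weyman's proof is characteristic-free: he constructs the filtration directly on the Schur module $L_\lambda\CV$ (built from tensor products of exterior powers and a Schur map) by filtering each $\Lambda^{\lambda'_j}\CV$ via the Koszul-type filtration coming from the short exact sequence, and then identifies the graded pieces combinatorially using standard tableaux. Your approach instead passes through Schur--Weyl duality, replacing the explicit tableau bookkeeping by the semisimplicity of $\kk[S_n]$, Frobenius reciprocity, and the branching rule for $S_k\times S_{n-k}\subset S_n$. The trade-off is clear: Weyman's argument is more hands-on and valid over any commutative ring, while yours is shorter and more conceptual but relies essentially on characteristic zero (both for the identification $\CV^\lambda=\Hom_{S_n}(V^\lambda,\CV^{\otimes n})$ and for the exactness of $\Hom_{S_n}(V^\lambda,-)$). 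Since the present paper fixes $\mathrm{char}\,\kk=0$ from the outset, your version is entirely adequate here.

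One minor point worth tightening: the passage from the local splitting to the global identification $F_k/F_{k+1}\cong\mathrm{Ind}^{S_n}_{S_k\times S_{n-k}}(\CU^{\otimes k}\otimes\CF^{\otimes(n-k)})$ deserves a sentence explaining why the local isomorphisms glue, i.e.\ why the quotient $F_k/F_{k+1}$ is intrinsically independent of the chosen splitting. This is routine (the filtration itself is defined without a splitting, and the isomorphism type of the quotient is determined by the ranks and the $S_n$-action), but it is the one place where a reader might pause.
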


We are going to use repeatedly the following statement that follows immediately from the
Littlewood--Richardson rule.
\begin{lemma}\label{lm:schprod}
    Let $\lambda$ and $\mu$ be dominant weights of $\GL(k)$. Let $\CU$ be a vector bundle of rank $k$.
    \begin{enumerate}
        \item For any irreducible summand $\CU^\alpha\subset\CU^\lambda\otimes\CU^\mu$ one has
        \[
            \lambda_i+\mu_k\leq\alpha_i\leq\lambda_1+\mu_i \quad\text{for all $1\leq i\leq k$.}
        \]
        \item If, moreover, $\lambda_k,\mu_k\geq 0$, then for every irreducible summand
        $\CU^\alpha\subset\CU^\lambda\otimes\CU^{-\mu}$ with $\alpha_k\geq 0$
        one has $\alpha\subseteq\lambda$.
    \end{enumerate}
\end{lemma}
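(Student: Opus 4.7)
The plan is to reduce both parts to the combinatorial Littlewood--Richardson rule for honest partitions and then extract the numerical bounds from a standard structural property of Littlewood--Richardson tableaux. First, I observe that both inequalities in part~(1) are invariant under uniform shifts of $\lambda$, $\mu$, and $\alpha$: replacing $\lambda$ by $\lambda-(\lambda_k,\ldots,\lambda_k)$, $\mu$ by $\mu-(\mu_k,\ldots,\mu_k)$, and $\alpha$ by $\alpha-(\lambda_k+\mu_k,\ldots,\lambda_k+\mu_k)$ corresponds to twisting the two factors of the tensor product by appropriate powers of $\det\CU$, and preserves both the multiplicity $m^\alpha_{\lambda,\mu}$ and the two bounds in question. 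Hence we may assume $\lambda$ and $\mu$ are honest partitions, in which case the Littlewood--Richardson rule realises $m^\alpha_{\lambda,\mu}$ as the number of Littlewood--Richardson skew tableaux of shape $\alpha/\lambda$ and content $\mu$. The lower bound $\alpha_i\geq\lambda_i+\mu_k$ then reduces to the containment $\lambda\subseteq\alpha$ required for the skew shape to exist.

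The upper bound $\alpha_j\leq\lambda_1+\mu_j$ relies on the classical fact that in any such Littlewood--Richardson tableau $T$, every entry in row $j$ is at most $j$. I would prove this by examining the first occurrence of an entry $e$ in the reverse reading order: the Yamanouchi (lattice word) condition forces some $(e-1)$ to have been read earlier, weak increase along rows rules out the same row, so this $(e-1)$ sits in a strictly higher row. Iterating gives $e-1$ distinct rows above the one containing our $e$, hence $e$ appears only in rows of index $\geq e$. Once this is in hand, strict increase along columns forces every column of $T$ lying strictly to the right of column $\lambda_1$ to read $1,2,3,\ldots$ from top to bottom. Consequently the number of $j$'s contributed by these columns equals $\max(0,\alpha_j-\lambda_1)$, and this is bounded above by the total count of $j$'s in $T$, namely $\mu_j$; the desired bound follows.

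Part~(2) then drops out of part~(1) by Schur-type duality. Since $\CU^{-\mu}=(\CU^\mu)^*$, the adjunction
\[
\Hom_{\GL(k)}(V_\alpha,V_\lambda\otimes V_\mu^*)=\Hom_{\GL(k)}(V_\alpha\otimes V_\mu,V_\lambda)
\]
shows that $\CU^\alpha$ is a summand of $\CU^\lambda\otimes\CU^{-\mu}$ if and only if $\CU^\lambda$ is a summand of $\CU^\alpha\otimes\CU^\mu$. Applying the lower bound of part~(1) to the latter tensor product, with $\alpha$ playing the role of $\lambda$, yields $\lambda_i\geq\alpha_i+\mu_k$, and the hypothesis $\mu_k\geq 0$ gives $\alpha_i\leq\lambda_i$. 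The assumption $\alpha_k\geq 0$ ensures $\alpha$ is itself a partition, so $\alpha\subseteq\lambda$.

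The only mildly nontrivial step is the row-entry bound for Littlewood--Richardson tableaux used in part~(1); everything else is direct bookkeeping, consistent with the author's remark that the lemma is an immediate consequence of the Littlewood--Richardson rule.
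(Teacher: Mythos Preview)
Your argument is correct and is precisely the natural elaboration of what the paper asserts: the lemma is stated there without proof, with only the remark that it ``follows immediately from the Littlewood--Richardson rule.'' Your reduction to partitions via determinant twists, the use of the containment $\lambda\subseteq\alpha$ for the lower bound, the row-bound/column-reading argument for the upper bound, and the adjunction trick for part~(2) are all sound and constitute exactly the kind of unpacking the author had in mind.
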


\subsection{Vector bundles on Grassmannians}
Let $V$ be an $n$-dimensional vector space.
The Grassmannian $X=\Gr(k,V)$ is a homogeneous space with respect to the group $\sG=\GL(V)$.
Namely, $X=\sG/\sP$, where $\sP$ is a point stabilizer. The subgroup $\sP$ is parabollic.

As the variety $X$ is homogeneous, there is an equivalence of the categories $Coh^\sG(X)$ of coherent 
$\sG$-equivariant sheaves on $X$ and finite-dimensional representations $Rep-\sP$.
Moreover, this is an equivalence of abelian tensor categories: it preserves tensor products, sends
dual objects to dual objects, etc. Thus, the study of equivariant coherent sheaves on $X$ is equivalent
to the study of finite-dimensional representations of $\sP$.

Unfortunately however the group $\sP$ is not reductive. Denote by $\sU\subset\sP$ the unipotent radical $\sP$, and
by $\sL=\sP/\sU$ the Levi quotient. Recall that the projection $\sP\to\sL$ admits a section, which allows
one to make $\sL$ into a subgroup $\sL\subset\sP$.

We are primarily interested in equivariant vector bundles on $X$. It is well known that any irreducible
equivariant vector bundle on $X$ is of the form
$\CU^\lambda\otimes\left(V/\CU\right)^\mu$.
In terms of representations, these are restrictions on $\sP$ of the corresponding irreducible representation
of~$\sL$.

On the other hand, one can restrict any finite-dimensional representation of $\sP$ on $\sL$. As the latter
is reductive, the restricted representation can be decomposed into a direct sum of irreducibles.
In particular, any equivariant vector bundle admits a filtration with irreducible subquotients.

Besides the ordinary derived category $D^b(X)$ we will need the equivariant derived category $D^b_\sG(X)$.
Given a pair of equivariant vector bundles $\CE,\CF\in Coh^\sG(X)$, the space $\Hom(\CE,\CF)$ carries
a natural structure of a $\sG$-representation. Equivariant morphisms between them are nothing but the invariants
\begin{equation}\label{eq:eqhom}
    \Hom_\sG(\CE,\CF) = \Hom(\CE,\CF)^\sG.
\end{equation}
The group $\sG$ is reductive, so the functor of $\sG$-invariants is exact, and
\begin{equation}\label{eq:eqext}
    \Ext^i_\sG(\CE,\CG) = \Ext^i(\CE,\CG)^\sG.
\end{equation}
We immediately conclude that formulas~\eqref{eq:eqhom} and~\eqref{eq:eqext} hold for arbitrary
$E,F\in D^b_\sG(X)$.

The Borel--Bott--Weil theorem is a tool for computing cohomology of line bundles on the flag space
of a semisimple algebraic group. It may be used for computing cohomology of equivariant vector bundles
on Grassmannians as well. All the details can be found in~\cite{Weyman_Book}.

\section{Exceptional bundles}\label{sec:elm}
Let $X=\Gr(k,V)$ be a Grassmannian, which is not a projective space (in other words, assume $1<k<n-1$).
Let us also fix a pair of integers $0 < w < n-k$ and $0 < h < k$, and consider the partial flag variety
$Z=\Fl(k-h,k;V)$ together with projections on $X$ and $X'=\Gr(k-h,V)$.
\[
  \xymatrix{
      & Z \ar[dl]_p \ar[dr]^q & \\
      X & & X'
  }
\]
Denote by $\CU$ and $\CW$ the tautological sub-bundles of ranks $k$ and $k-h$ on $X$ and~$X'$ respectively.
Without further mention, we denote by the same letters their pullbacks on $Z$.

\begin{proposition}\label{pr:elm}
    Let $(\lambda,\mu)\in\Bl_{w,h}$. Then
    $R^ip_*\left((\CU/\CW)^\lambda\otimes(V/\CW)^{-\mu}\right) = 0$ for all $i>0$.
\end{proposition}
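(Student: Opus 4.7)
The plan is to filter $(V/\CW)^{-\mu}$ using a standard Schur-functor filtration and then check the vanishing fiberwise via Borel--Bott--Weil, with the combinatorial bridge supplied by Lemma~\ref{lm:schprod}.

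First I would dualize the tautological exact sequence on $Z$,
\[
    0 \to \CU/\CW \to V/\CW \to V/\CU \to 0,
\]
and apply Proposition~\ref{pr:schses} to obtain a filtration of $(V/\CW)^{-\mu} = \Sigma^\mu(V/\CW)^*$ whose associated graded pieces are $(V/\CU)^{-\alpha}\otimes\Sigma^{\mu/\alpha}(\CU/\CW)^*$ for $\alpha\subseteq\mu$. A standard long exact sequence argument lets me treat the graded pieces one at a time, and Proposition~\ref{pr:skewsh} further decomposes each skew Schur bundle into honest Schur bundles of the form $(\CU/\CW)^{-\gamma^{\mathrm{rev}}}$ with $\gamma\subseteq\mu$ (padded to $h$ rows; the rank of $\CU/\CW$ is $h$). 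Because $(V/\CU)^{-\alpha}$ is pulled back from $X$, the projection formula strips it off, reducing the proposition to showing that
\[
    R^i p_*\bigl((\CU/\CW)^\lambda\otimes(\CU/\CW)^{-\gamma^{\mathrm{rev}}}\bigr)=0 \qquad (i>0)
\]
for every $\gamma\subseteq\mu$ arising in this way; in particular $\gamma_1\leq\mu_1\leq k-h$.

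Next I would expand the tensor product inside the pushforward via the Littlewood--Richardson rule, obtaining $\GL(h)$-irreducibles $(\CU/\CW)^\delta$. Applying Lemma~\ref{lm:schprod}(1) with second weight $-\gamma^{\mathrm{rev}}$ yields the crucial inequality
\[
    \delta_h \;\geq\; \lambda_h + (-\gamma^{\mathrm{rev}})_h \;=\; \lambda_h - \gamma_1 \;\geq\; 0 - (k-h) \;=\; -(k-h),
\]
where I use $\lambda_h\geq 0$ (since $\lambda\in\You_{n-k-w,h}$ has nonnegative parts when padded to $h$ rows) and $\gamma_1\leq k-h$ from the block hypothesis on $\mu$.

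Finally, since $p$ is a smooth Grassmannian bundle with fibers $\Gr(k-h,k)$, on which $\CU/\CW$ restricts to the rank-$h$ tautological quotient $\CQ$, flat base change reduces each $R^i p_*(\CU/\CW)^\delta$ to $H^i(\Gr(k-h,k),\Sigma^\delta\CQ)$. A direct Borel--Bott--Weil computation shows that for $\delta_h\geq 0$ the weight is regular dominant so only $H^0$ survives, while for $\delta_h\in\{-(k-h),\dots,-1\}$ the translate $\delta+\rho$ has a repeated coordinate (the entry at position $h$ collides with one of the trailing $k-h$ entries) so the weight is singular and all cohomology vanishes. In either case $H^{>0}$ is zero, which is exactly what the bound $\delta_h\geq -(k-h)$ buys. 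The real obstacle is to align Lemma~\ref{lm:schprod} with the regular/singular dichotomy of Borel--Bott--Weil; the alignment is not automatic and uses both block constraints essentially, suggesting that the block hypothesis in Conjecture~\ref{conj} is sharp for this line of reasoning.
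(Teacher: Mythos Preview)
Your proof is correct and follows essentially the same approach as the paper: filter $(V/\CW)^{-\mu}$ via the tautological sequence, strip off the $(V/\CU)^{-\alpha}$ factor by the projection formula, decompose the remaining tensor product into irreducibles $(\CU/\CW)^\delta$, bound $\delta_h\geq -(k-h)$ using Lemma~\ref{lm:schprod}, and finish with Borel--Bott--Weil. One harmless slip: $\lambda$ lies in $\You_{w,h}$, not $\You_{n-k-w,h}$, but you only use $\lambda_h\geq 0$, which holds either way.
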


\begin{proof}
    Consider the short exact sequence of vector bundles on $Z$
    \[
    0\to \CU/\CW\to V/\CW\to V/\CU\to 0.
    \]
    There is a filtration on $(V/\CW)^{-\mu}$ with subquotients
    $(V/\CU)^{-\nu}\otimes(\CU/\CW)^{-\mu/\nu}$, where $\nu$ runs over the set of subdiagrams $\nu\subseteq\mu$. Thus,
    $(\CU/\CW)^\lambda\otimes(V/\CW)^{-\mu}$ carries a filtration with associated quotients
    \[
    (\CU/\CW)^\lambda\otimes(V/\CU)^{-\nu}\otimes(\CU/\CW)^{-\mu/\nu},
    \]
    where $\nu\subseteq\mu$. Consider one of the quotients and remark that the bundle $(V/\CU)^{-\nu}$ is
    pulled back from $X$. The projection formula implies
    \[
        R^ip_*\left((\CU/\CW)^\lambda\otimes(V/\CU)^{-\nu}\otimes(\CU/\CW)^{-\mu/\nu}\right) =
        R^ip_*\left((\CU/\CW)^\lambda\otimes(\CU/\CW)^{-\mu/\nu}\right)\otimes(V/\CU)^{-\nu}.
    \]
    At the same time, decompose $(\CU/\CW)^{-\mu/\nu}$ into a direct sum
    \[
        (\CU/\CW)^{-\mu/\nu}=\bigoplus_{\alpha\subseteq\mu} m^{\mu}_{\alpha,\nu}(\CU/\CW)^{-\alpha}, 
    \]
    where $m^{\mu}_{\alpha,\nu}$ is the corresponding Littlewood--Richardson coefficient.
    We see that it is enough to prove vanishing $R^\bullet p_*\left((\CU/\CW)^\lambda\otimes(\CU/\CW)^{-\alpha}\right)=0$
    for all $\alpha\subseteq\mu$. In order to do that, consider a single irreducible summand $(\CU/\CW)^\beta\subset (\CU/\CW)^\lambda\otimes(\CU/\CW)^{-\alpha}$.
    According to the Borel--Bott--Weil theorem, there is a nontrivial higher direct image if and only if
    all the elements in the sequence
    \begin{equation}\label{eprbbw}
        (k+\beta_1,\ldots,k-h+1+\beta_h,k-h,\ldots,1)
    \end{equation}
    are distinct, and the sequence itself is not decreasing. Remark that the first $h$ members
    of~\eqref{eprbbw} are distinct and decreasing. The same is true for the last $k-h$ terms of~\eqref{eprbbw}.
    If there is a nontrivial higher direct image, then $k-h+1+\beta_h<1$. From Lemma~\ref{lm:schprod} we have
    inequalities
    $\beta_h\geq -\alpha_1\geq -\mu_1\geq h-k$, which imply $k-h+1+\beta_h\geq 1$.
    Thus, either the sequence~\eqref{eprbbw} is strongly decreasing, or a couple of its terms are equal.
    In both cases there are no higher direct images.
\end{proof}

We are going to define now the bundles of interest.
For any pair of diagrams $(\lambda,\mu)\in\Bl_{w,h}$ put
\begin{equation}\label{elm:def1}
    \CE^{\lambda,\mu}=p_*\left((\CU/\CW)^\lambda\otimes(V/\CW)^{-\mu}\right).
\end{equation}
Remark that our definition depends on the choice of a block. Despite that, we are soon going to show that
the vector bundle $\CE^{\lambda,\mu}$ depends only on $\lambda$ and $\mu$ (what is important is that the
pair $(\lambda,\mu)$ belongs to \emph{some} block). We will need the following lemma.

\begin{lemma}\label{lm:eeq}
    There is a natural equivariant structure on the bundle $\CE^{\lambda,\mu}$, and it belongs to the
    subcategory $\CE^{\lambda,\mu} \in \left<\CU^\beta\otimes (V/\CU)^{-\nu} \mid \beta\subseteq\lambda,\nu\subseteq\mu\right>\subset D^b_\sG(X)$.
\end{lemma}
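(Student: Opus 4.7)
The plan is to get the equivariant structure essentially for free, then prove the subcategory containment by building an equivariant filtration on $(\CU/\CW)^\lambda\otimes(V/\CW)^{-\mu}$, pushing it down by $p$ via the projection formula, and identifying the surviving pieces fibrewise. For equivariance: $\CU$, $\CW$ and the trivial bundle with fibre $V$ are all $\sG$-equivariant on $Z$ and $p\colon Z\to X$ is $\sG$-equivariant, so every Schur functor of them, and hence the derived pushforward, inherits a natural $\sG$-equivariant structure; Proposition~\ref{pr:elm} forces the higher direct images to vanish, so $\CE^{\lambda,\mu}$ is a genuine $\sG$-equivariant coherent sheaf and thus an object of $D^b_\sG(X)$.

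For the containment, I would first apply Proposition~\ref{pr:schses} to the dual of the tautological sequence $0\to\CU/\CW\to V/\CW\to V/\CU\to 0$ on $Z$; this yields an equivariant filtration on $(V/\CW)^{-\mu}$ with associated graded $(V/\CU)^{-\nu}\otimes(\CU/\CW)^{-\mu/\nu}$ for $\nu\subseteq\mu$. Tensoring with $(\CU/\CW)^\lambda$ and pushing down by $p$ --- each subquotient has vanishing higher direct images by exactly the argument of Proposition~\ref{pr:elm}, so the filtration descends cleanly --- produces an equivariant filtration on $\CE^{\lambda,\mu}$ whose subquotients, by the projection formula, are
$$(V/\CU)^{-\nu}\otimes p_*\bigl((\CU/\CW)^\lambda\otimes(\CU/\CW)^{-\mu/\nu}\bigr),\qquad \nu\subseteq\mu.$$

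It remains to identify each such $p_*\bigl((\CU/\CW)^\lambda\otimes(\CU/\CW)^{-\mu/\nu}\bigr)$ as a direct sum of $\CU^\beta$ with $\beta\subseteq\lambda$. Proposition~\ref{pr:skewsh} together with Littlewood--Richardson reduces the task to evaluating $p_*(\CU/\CW)^\gamma$ on each irreducible summand $(\CU/\CW)^\gamma\subset(\CU/\CW)^\lambda\otimes(\CU/\CW)^{-\alpha}$ with $\alpha\subseteq\mu$. The relative Borel--Bott--Weil theorem, applied along the fibres of $p$ exactly as in the proof of Proposition~\ref{pr:elm}, returns $\CU^\gamma$ (padded with $k-h$ trailing zeros) when $\gamma_h\geq 0$ and $0$ when $-1\geq\gamma_h\geq h-k$, since in the latter case the shifted sequence picks up a repeat inside $\{1,\ldots,k-h\}$. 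The window $\gamma_h\geq h-k$ itself comes from Lemma~\ref{lm:schprod}(1) together with the block hypothesis $\mu_1\leq k-h$, and Lemma~\ref{lm:schprod}(2) finally forces every surviving $\gamma$ to satisfy $\gamma\subseteq\lambda$, placing $\CE^{\lambda,\mu}$ inside the stated subcategory. The main technical point is this final synchronisation: the block condition must lock $\gamma_h$ into the narrow window where negative values translate into a repeat (and hence a zero pushforward) rather than a nontrivial higher cohomology contribution.
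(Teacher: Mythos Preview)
Your proposal is correct and follows essentially the same route as the paper. The paper's proof simply points back to the argument of Proposition~\ref{pr:elm} for the filtration and the Borel--Bott--Weil vanishing, then invokes Lemma~\ref{lm:schprod} to pin down $\beta\subseteq\lambda$; you unpack exactly these steps explicitly (the filtration from Proposition~\ref{pr:schses}, the projection formula, the skew-Schur decomposition, the BBW window, and Lemma~\ref{lm:schprod}(2) for the final containment).
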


\begin{proof}
    Equivariance follows immediately from the definition of $\CE^{\lambda,\mu}$ as a direct image
    of an equivariant bundle under the equivariant morphism $p$. It follows from the proof of
    Proposition~\ref{pr:elm} that $\CE^{\lambda,\mu}$ admits a filtration with associated quotients of the form
    \[
        p_*\left((\CU/\CW)^\lambda\otimes(\CU/\CW)^{-\alpha}\right)\otimes (V/\CU)^{-\nu},
    \]
    where $\nu\subseteq\mu$. We have also seen that the only summands
    $(\CU/\CW)^\beta\subset (\CU/\CW)^\lambda\otimes(\CU/\CW)^{-\alpha}$ having a nontrivial direct image
    are those with $\beta_h\geq 0$. At the same time, as a result of Lemma~\ref{lm:schprod},
    $\beta_i\leq\lambda_i$ for all
    $i=1,\ldots,h$ and hence $\beta\subseteq\lambda$.
\end{proof}

\begin{proposition}\label{pr:eqec}
    The bundles $\left<\CU^\gamma\otimes (V/\CU)^{-\delta} \mid (\gamma,\delta)\in\Bl_{w,h}\right>$
    form an exceptional collection in the equivariant derived category $D^b_\sG(X)$.
    In particular, given $(\gamma,\delta),(\alpha,\beta)\in\Bl_{w,h}$
    \[
        \Ext^\bullet_\sG \left(\CU^\gamma\otimes (V/\CU)^{-\delta}, \CU^\alpha\otimes (V/\CU)^{-\beta}\right) \neq 0,
    \]
    if and only if $\gamma\subseteq\alpha$, $\delta\subseteq\beta$.
\end{proposition}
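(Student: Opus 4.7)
The plan is to establish the proposition via a duality with the bundles $\CE^{\alpha,\beta}$ defined in~\eqref{elm:def1}. The key step is the orthogonality
\[
\Ext^\bullet_\sG\bigl(\CU^\gamma \otimes (V/\CU)^{-\delta},\; \CE^{\alpha,\beta}\bigr)
\;=\; \begin{cases}\kk\text{ in degree }0 & \text{if }(\gamma,\delta) = (\alpha,\beta),\\ 0 & \text{otherwise.}\end{cases}
\]
By Proposition~\ref{pr:elm} and the adjunction $p^* \dashv p_*$, this Ext group equals the Ext group on $Z$ between the pullback of $\CU^\gamma \otimes (V/\CU)^{-\delta}$ and $(\CU/\CW)^\alpha \otimes (V/\CW)^{-\beta}$, i.e., the $\sG$-invariants of $H^\bullet(Z,\, \CU^{-\gamma} \otimes (V/\CU)^\delta \otimes (\CU/\CW)^\alpha \otimes (V/\CW)^{-\beta})$.

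To carry out this computation, I would apply Proposition~\ref{pr:schses} to the short exact sequences $0 \to \CW \to \CU \to \CU/\CW \to 0$ and $0 \to \CU/\CW \to V/\CW \to V/\CU \to 0$ (and their duals) on $Z$, filtering the Hom-bundle so that each graded piece becomes a tensor product $\CW^a \otimes (\CU/\CW)^b \otimes (V/\CU)^c$, indexed by subdiagrams $\sigma \subseteq \gamma$ and $\tau \subseteq \beta$. After further decomposing by the Littlewood--Richardson rule, each irreducible equivariant summand contributes either $\kk$ or $0$ to the $\sG$-invariant cohomology, as determined by Borel--Bott--Weil: the contribution is nonzero exactly when $(c,b,a)$ plus the half-sum of positive roots $\rho_V$ is a permutation of $\rho_V = (n-1,\ldots,0)$. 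The main combinatorial step is to show that the block-shape constraints $\gamma,\alpha \in \You_{n-k-w,h}$, $\delta,\beta \in \You_{k-h,w}$, combined with the weight bounds from Lemma~\ref{lm:schprod}, force this regularity condition to be satisfied by exactly one layer when $(\gamma,\delta) = (\alpha,\beta)$ (taking $\sigma = \gamma = \alpha$ and $\tau = \delta = \beta$, so that all Schur factors collapse and the surviving bundle is $\CO_Z$) and by none otherwise.

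Given this orthogonality, the proposition follows formally: Lemma~\ref{lm:eeq} places each $\CE^{\alpha,\beta}$ in the triangulated subcategory generated by $\{\CU^{\beta'} \otimes (V/\CU)^{-\nu'} : \beta' \subseteq \alpha,\, \nu' \subseteq \beta\}$, and since both collections are indexed by the same set $\Bl_{w,h}$, the upper-triangular change of basis is invertible. Inverting it expresses each $\CU^\alpha \otimes (V/\CU)^{-\beta}$ as a complex of $\CE^{\alpha',\beta'}$ with $(\alpha',\beta') \subseteq (\alpha,\beta)$, so the orthogonality forces the pairwise $\Ext^\bullet_\sG$ to vanish whenever $(\gamma,\delta) \not\subseteq (\alpha,\beta)$ and to yield $\kk$ on the diagonal, giving the exceptional collection. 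The principal obstacle is the combinatorial Borel--Bott--Weil step in the middle paragraph: the uniqueness of the trivial-representation contribution requires careful case analysis exploiting the sharp bounds of the block shape.
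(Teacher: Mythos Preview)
Your route is quite different from the paper's and considerably more involved. The paper proves the proposition directly on $X$: one writes
\[
\Ext^\bullet_\sG\bigl(\CU^\gamma\otimes(V/\CU)^{-\delta},\,\CU^\alpha\otimes(V/\CU)^{-\beta}\bigr)
=H^\bullet\bigl(X,\CU^\alpha\otimes\CU^{-\gamma}\otimes(V/\CU)^{-\beta}\otimes(V/\CU)^\delta\bigr)^\sG,
\]
decomposes by Littlewood--Richardson into summands $\CU^\lambda\otimes(V/\CU)^\mu$, and observes via Borel--Bott--Weil that the $\sG$-invariant part is nonzero only when $(n+\mu_1,\dots,1+\lambda_k)$ is a permutation of $(1,\dots,n)$. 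Lemma~\ref{lm:schprod} together with $\alpha_k=\beta_{n-k}=0$ then forces $\lambda\geq 0$ and $-\mu\geq 0$, hence $\gamma\subseteq\alpha$ and $\delta\subseteq\beta$. That is the whole proof; no passage to $Z$, no $\CE^{\lambda,\mu}$, no inversion. Your ``key step'' on $Z$ is essentially the computation of Proposition~\ref{pr:elmmain}, which in the paper is proved \emph{after} (and independently of) the present proposition; the duality you aim for is then recorded as a corollary of both. So you are inverting the paper's logical order, replacing a short computation on $X$ by a longer one on $Z$ plus a formal deduction.

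There is also a genuine gap in your final paragraph. You assert that the change of basis from $\{\CU^{\alpha'}\otimes(V/\CU)^{-\beta'}\}$ to $\{\CE^{\alpha',\beta'}\}$ is invertible because it is ``upper-triangular'' and ``both collections are indexed by the same set''. Triangularity plus equal cardinality does not give invertibility: you need the diagonal to be a unit. Lemma~\ref{lm:eeq} only says $\CE^{\alpha,\beta}$ lies in the subcategory generated by smaller $\CU^{\alpha'}\otimes(V/\CU)^{-\beta'}$; it does not assert that $\CU^\alpha\otimes(V/\CU)^{-\beta}$ itself occurs, let alone with multiplicity one. What is actually needed (and is implicit in the filtration from the proof of Proposition~\ref{pr:elm}, though not stated in Lemma~\ref{lm:eeq}) is that the top associated graded piece of $\CE^{\alpha,\beta}$ is exactly $\CU^\alpha\otimes(V/\CU)^{-\beta}$. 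With that in hand one can run an honest induction on the poset to show $\langle\CE^{\alpha',\beta'}:(\alpha',\beta')\leq(\alpha,\beta)\rangle=\langle\CU^{\alpha'}\otimes(V/\CU)^{-\beta'}:(\alpha',\beta')\leq(\alpha,\beta)\rangle$, after which your orthogonality does yield the result. But as written the inversion step does not follow, and once you supply the missing ingredient you will have done strictly more work than the direct argument on $X$.
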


\begin{proof}
    By definition
    \begin{align*}
        \Ext^\bullet_\sG \left(\CU^\gamma\otimes (V/\CU)^{-\delta}, \CU^\alpha\otimes (V/\CU)^{-\beta}\right) &
        = \Ext^\bullet \left(\CU^\gamma\otimes (V/\CU)^{-\delta}, \CU^\alpha\otimes (V/\CU)^{-\beta}\right)^{\sG} = \\
        & = H^\bullet \left(X, \CU^\alpha\otimes\CU^{-\gamma}\otimes (V/\CU)^{-\beta}\otimes (V/\CU)^\delta\right)^{\sG}.
    \end{align*}
    Let $\CU^\lambda$ be an irreducible summand of $\CU^\alpha\otimes\CU^{-\gamma}$,
    $(V/\CU)^\mu$ be an irreducible summand of $(V/\CU)^{-\beta}\otimes (V/\CU)^\delta$.
    
    The space of interest $\Ext^\bullet_\sG$ is nontrivial, if and only if for a pair of summands
    $\CU^\lambda$ and $(V/\CU)^\mu$ the sequence
    \begin{equation}\label{eqortsq}
        (n+\mu_1,\ldots,k+1+\mu_{n-k}, k+\lambda_1,\ldots,1+\lambda_k)
    \end{equation}
    is a permutation of $n$ consecutive integers.
    
    It follows from $(\alpha,\beta)\in\Bl_{w,h}$ that $\alpha_k=0$ and $\beta_{n-k}=0$.
    Applying Lemma~\ref{lm:schprod}, we see that $\lambda_k\leq \alpha_k = 0$, $\mu_1\geq -\beta_{n-k} = 0$.
    This, the sequence~\eqref{eqortsq} can only be a nontrivial permutation of $(1,\ldots,n)$.
    We deduce that a necessary condition would be $\lambda\geq 0$ and $-\mu\geq 0$. The latter is possible
    if and only if $\gamma\subseteq\alpha$, $\delta\subseteq\beta$, which is the semi-orthogonality condition.
    
    Finally, in the case $\gamma=\alpha$, $\delta=\beta$, the only suitable summands are trivial and come with
    multiplicity one. Thereby, the objects are exceptional.
\end{proof}

\begin{remark}
    It is not hard to check that the set of \emph{all} irreducible equivariant vector bundles on $X$
    admits an ordering, such that one gets an infinite full exceptional collection in the equivariant
    derived category $D^b_\sG(X)$ (see~\cite[Theorem~3.4]{KuzPol}).
\end{remark}

\begin{proposition}\label{pr:elmmain}
    Let $(\alpha,\beta),(\lambda,\mu)\in\Bl_{w,h}$ be such that
    $\Ext^\bullet \left(\CU^\alpha\otimes (V/\CU)^{-\beta}, \CE^{\lambda,\mu}\right) \neq 0$.
    Then $\alpha\supseteq\lambda$, and $\alpha=\lambda$ implies $\beta\supseteq\mu$.
\end{proposition}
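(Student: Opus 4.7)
My plan is to compute the left-hand side via adjunction and then reduce to Borel--Bott--Weil on $Z$. By Proposition~\ref{pr:elm} the higher direct images under $p$ of the bundle defining $\CE^{\lambda,\mu}$ vanish, so combining this with the adjunction $p^*\dashv Rp_*$ gives
\[
\Ext^\bullet_X\bigl(\CU^\alpha\otimes(V/\CU)^{-\beta},\,\CE^{\lambda,\mu}\bigr)
   =H^\bullet\bigl(Z,\,\CU^{-\alpha}\otimes(V/\CU)^\beta\otimes(\CU/\CW)^\lambda\otimes(V/\CW)^{-\mu}\bigr).
\]

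Next I would decompose the integrand into irreducible equivariant bundles on $Z$. Applying Proposition~\ref{pr:schses} to the dualized short exact sequences $0\to(\CU/\CW)^*\to\CU^*\to\CW^*\to 0$ and $0\to(V/\CU)^*\to(V/\CW)^*\to(\CU/\CW)^*\to 0$ produces filtrations of $\CU^{-\alpha}$ and $(V/\CW)^{-\mu}$ whose graded pieces are sums of $(\CU/\CW)^{-\tau}\otimes\CW^{-\sigma}$ with $\tau,\sigma\subseteq\alpha$ and of $(V/\CU)^{-\nu}\otimes(\CU/\CW)^{-\xi}$ with $\nu,\xi\subseteq\mu$, respectively. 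Further decomposing by Littlewood--Richardson the products $(\CU/\CW)^\lambda\otimes(\CU/\CW)^{-\tau}\otimes(\CU/\CW)^{-\xi}$ into summands $(\CU/\CW)^\phi$ and $(V/\CU)^\beta\otimes(V/\CU)^{-\nu}$ into summands $(V/\CU)^\psi$, the spectral sequence of the filtration shows that non-vanishing of the Ext forces non-vanishing of $H^\bullet(Z,\CW^{-\sigma}\otimes(\CU/\CW)^\phi\otimes(V/\CU)^\psi)$ for at least one compatible tuple. Borel--Bott--Weil on $Z$ then translates this into the purely combinatorial condition that the weight-plus-$\rho$ sequence
\[
\bigl(n-\sigma_{k-h},\ldots,n-k+h+1-\sigma_1,\; n-k+h+\phi_1,\ldots,n-k+1+\phi_h,\; n-k+\psi_1,\ldots,1+\psi_{n-k}\bigr)
\]
has pairwise distinct entries.

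The main obstacle is the resulting combinatorial analysis. Lemma~\ref{lm:schprod} squeezes each $\phi_i$ between expressions involving $\lambda$ and (sub-)partitions of $\alpha$ and $\mu$, and analogously pins down $\psi_l$ in terms of $\beta$ and $\nu\subseteq\mu$. The task is to show that whenever $\lambda\not\subseteq\alpha$ inside the rectangle $\You_{n-k-w,h}$, the middle block of the BBW sequence is forced to collide with some entry of the upper $\CW$-block (whose range is pinned down by $\sigma\subseteq\alpha$), yielding vanishing cohomology for \emph{every} piece of the filtration. When $\alpha=\lambda$, the Littlewood--Richardson constraints pin $\tau=\lambda$ and $\sigma=0$, reducing $\phi$ to summands of a tensor product involving only $\xi\subseteq\mu$; then failure of $\mu\subseteq\beta$ produces an analogous collision in the lower $(V/\CU)$-block. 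The bookkeeping exploits the fact that $\rho$ naturally separates into three consecutive blocks of integers of sizes $k-h$, $h$, and $n-k$, confining each block's contribution to a controlled interval.
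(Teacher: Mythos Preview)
Your opening reduction to cohomology on $Z$ is correct and matches the paper. The divergence begins immediately after: you filter \emph{both} $\CU^{-\alpha}$ and $(V/\CW)^{-\mu}$ and then attempt an absolute Borel--Bott--Weil computation on $Z$ with three blocks. The paper instead leaves $(V/\CW)^{-\mu}$ intact and pushes forward along the \emph{other} projection $q\colon Z\to X'=\Gr(k-h,V)$. Since $(V/\CW)^{-\mu}$ is pulled back from $X'$, the projection formula removes it from the $Rq_*$ computation, and the relative Borel--Bott--Weil for $q$ involves only two blocks, $(V/\CU)^\beta$ and $(\CU/\CW)^\gamma$, where $\gamma$ is a summand of $(\CU/\CW)^\lambda\otimes(\CU/\CW)^{-\tau}$ and nothing else. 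The decisive collision is between the middle block and the $(V/\CU)^\beta$ block, not the $\CW$-block: because $\beta\in\You_{k-h,n-k-w}$ has its last $w$ rows equal to zero, the corresponding $w$ entries of the $\rho$-shifted $(V/\CU)$ sequence form the rigid interval $\{h+1,\ldots,h+w\}$, and the bound $\gamma_1\le\lambda_1\le w$ forces $h+\gamma_1$ into that interval unless $\gamma_1\le 0$. Lemma~\ref{lm:schprod}(2) then gives $\lambda\subseteq\tau\subseteq\alpha$ in one line. When $\alpha=\lambda$ the only surviving relative direct image is $(V/\CW)^\beta$, and the remaining computation on $X'$ is literally the semi-orthogonality of the dual Kapranov collection there, yielding $\beta\supseteq\mu$.

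Your route has a genuine gap. Filtering $(V/\CW)^{-\mu}$ replaces the rigid $(V/\CU)^\beta$ block by a variable $(V/\CU)^\psi$ block with $\psi$ a summand of $(V/\CU)^\beta\otimes(V/\CU)^{-\nu}$; for $l>n-k-w$ one only knows $\psi_l\le 0$, not $\psi_l=0$, so the rigidity that drives the collision disappears. Your asserted collision with the $\CW^{-\sigma}$ block does not work either: the only rigid portion of that block (coming from $\sigma$ having at most $h$ nonzero rows) has size $\max(k-2h,0)$ and sits in $\{n-k+2h+1,\ldots,n\}$, which a middle entry $n-k+h+\phi_1\le n-k+h+w$ need not reach at all. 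Moreover, your claimed reduction ``$\tau=\lambda$, $\sigma=0$'' in the case $\alpha=\lambda$ presupposes $\lambda\subseteq\tau$, which is exactly what you have not yet established; and even granting it, your $\phi$ still carries the extra $(\CU/\CW)^{-\xi}$ factor with $\xi\subseteq\mu$, so the endgame does not collapse to anything as clean as Kapranov on $X'$. The missing idea is to push forward along $q$ before invoking Borel--Bott--Weil.
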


\begin{proof}
    By adjunction, rewrite
    \begin{align*}
        \Ext^i \left(\CU^\alpha\otimes (V/\CU)^{-\beta}, \CE^{\lambda,\mu}\right)
        & = \Ext^i \left(\CU^\alpha\otimes (V/\CU)^{-\beta}, p_*\left((\CU/\CW)^\lambda\otimes(V/\CW)^{-\mu}\right)\right) = \\
        & = H^i\left(Z, \CU^{-\alpha}\otimes (V/\CU)^{\beta}\otimes (\CU/\CW)^\lambda\otimes(V/\CW)^{-\mu}\right).
    \end{align*}
    Consider the Leray spectral sequence
    \begin{equation}\label{specselm}
      \begin{aligned}
        E^{ab}_2 & = H^a\left(X', R^bq_*\left(\CU^{-\alpha}\otimes (V/\CU)^{\beta}\otimes (\CU/\CW)^\lambda\otimes(V/\CW)^{-\mu}\right)\right) \\
        & \Rightarrow H^{a+b}\left(Z, \CU^{-\alpha}\otimes (V/\CU)^{\beta}\otimes (\CU/\CW)^\lambda\otimes(V/\CW)^{-\mu}\right).
      \end{aligned}
    \end{equation}
    Let us take one term of~\eqref{specselm}. By the projection formula,
    \[
        R^bq_*\left(\CU^{-\alpha}\otimes (V/\CU)^{\beta}\otimes (\CU/\CW)^\lambda\otimes(V/\CW)^{-\mu}\right)
        = R^bq_*\left(\CU^{-\alpha}\otimes (V/\CU)^{\beta}\otimes (\CU/\CW)^\lambda\right)\otimes(V/\CW)^{-\mu}.
    \]
    There is a filtration on $\CU^{-\alpha}$ with associated quotients
    of the form $(\CU/\CW)^{-\tau}\otimes\CW^{-\nu}$, where $\tau\subseteq\alpha$.
    
    Let us study the direct images
    \begin{equation}\label{rqelm}
        R^\bullet q_*\left((\CU/\CW)^{-\tau}\otimes\CW^{-\nu}\otimes (V/\CU)^{\beta}\otimes (\CU/\CW)^\lambda\right)
        = R^\bullet q_*\left((\CU/\CW)^{-\tau}\otimes (\CU/\CW)^\lambda\otimes (V/\CU)^{\beta}\right)\otimes\CW^{-\nu}.
    \end{equation}
    Recall that $Z$ is the relative Grassmannian $\Gr_{X'}(h, V/\CW)$.
    Consider an irreducible summand $(\CU/\CW)^\gamma\subset (\CU/\CW)^{-\tau}\otimes (\CU/\CW)^\lambda$.
    By the Borel--Bott--Weil theorem~\eqref{rqelm} is nontrivial if and only if all the terms in
    \begin{equation}\label{bbwsqelm}
        (n-k+h+\beta_1,\ldots,h+1+\beta_{n-k}, h+\gamma_1,\ldots,1+\gamma_1)
    \end{equation}
    are distinct. As $\beta\in\You_{k-h,n-k-w}$, one has $\beta_{n-k-w+1}=\ldots=\beta_{n-k}=0$.
    Thus, the first $n-k$ terms of~\eqref{bbwsqelm} are of the form
    \[
        (n-k+h+\beta_1,\ldots,h+w+1+\beta_{n-k-w}, h+w,\ldots,h+1).
    \]
    The next term equals $h+\gamma_1$, and by Lemma~\ref{lm:schprod}, $w\geq\lambda_1\geq\gamma_1$.
    Summing up, only the summands $(\CU/\CW)^\gamma$ with $\gamma_1\leq 0$ (which is equivalent to
    $-\gamma\geq 0$) contribute to higher direct images.
    By Lemma~\ref{lm:schprod}, the latter condition may hold only if
    $\lambda\subseteq\tau$. In turn, $\tau\subseteq\alpha$, which implies $\lambda\subseteq\alpha$.
    
    The only remaining case is $\lambda=\tau=\alpha$, the only interesting associated quotient of
    $\CU^{-\alpha}$ being equal to $(\CU/\CW)^{-\alpha}$, the only interesting summand $\gamma=0$,
    and the only non-trivial direct image
    \[
        R^0 q_*\left((\CU/\CW)^{-\tau}\otimes (\CU/\CW)^\lambda\otimes (V/\CU)^{\beta}\right)\otimes\CW^{-\nu} = (V/\CW)^\beta.
    \]
    All the nontrivial terms of the spectral sequence~\eqref{specselm} are concentrated in a single row
    and are equal to
    \[
        E^{a0}_2=H^a\left(X',(V/\CW)^\beta\otimes(V/\CW)^{-\mu}\right) = \Ext^a\left((V/\CW)^\mu, (V/\CW)^\beta\right).
    \]
    Finally, $(V/\CW)^\mu$ and $(V/\CW)^\lambda$ both belong to the dual Kapranov's collection on $X'$,
    thus we get the desired condition $\beta\supseteq\alpha$. Remark that for $\alpha=\lambda$ and $\beta=\mu$
    the only non-trivial space
    \[
        \Ext^\bullet \left(\CU^\alpha\otimes (V/\CU)^{-\beta}, \CE^{\lambda,\mu}\right)=
        \Hom\left(\CU^\alpha\otimes (V/\CU)^{-\beta}, \CE^{\lambda,\mu}\right) = \kk
    \]
    is one-dimensional, and all the morphisms are equivariant.
\end{proof}

We introduce a partial order on the set $\Bl_{w,h}$
\[
    (\alpha,\beta)\prec_1 (\lambda,\mu)\quad \Leftrightarrow\quad \alpha\supset\lambda\text{ or }\alpha=\lambda, \beta\supset\mu.
\]

\begin{corollary}\label{cor:elmort1}
    There is an exceptional collection in $D^b(X)$ of the form $\left<\CE^{\lambda,\mu} \mid (\lambda,\mu)\in\Bl_{w,h}\right>$
    with any total order $<$ refining $\prec_1$.
\end{corollary}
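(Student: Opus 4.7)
My plan is to reduce everything to Proposition~\ref{pr:elmmain}. By Lemma~\ref{lm:eeq}, each $\CE^{\lambda,\mu}$ sits in the triangulated subcategory $\CT_{\lambda,\mu}\subset D^b_\sG(X)$ generated by the irreducible equivariant bundles $\CU^\gamma\otimes(V/\CU)^{-\delta}$ with $\gamma\subseteq\lambda$ and $\delta\subseteq\mu$. Since $\Ext^\bullet(-,\CE^{\alpha,\beta})$ is a cohomological functor, the vanishing of $\Ext^\bullet(\CE^{\lambda,\mu},\CE^{\alpha,\beta})$ would follow from its vanishing on every such generator, which is precisely the type of computation controlled by Proposition~\ref{pr:elmmain}. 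So the whole proof becomes combinatorics on pairs of Young diagrams.

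To establish semi-orthogonality, I would fix $(\alpha,\beta)\prec_1(\lambda,\mu)$ and an arbitrary generator with $\gamma\subseteq\lambda$, $\delta\subseteq\mu$. By Proposition~\ref{pr:elmmain}, non-vanishing of $\Ext^\bullet(\CU^\gamma\otimes(V/\CU)^{-\delta},\CE^{\alpha,\beta})$ forces $\gamma\supseteq\alpha$ and, when $\gamma=\alpha$, also $\delta\supseteq\beta$. In the case $\alpha\supsetneq\lambda$ the chain $\alpha\subseteq\gamma\subseteq\lambda\subsetneq\alpha$ is absurd; in the case $\alpha=\lambda$, $\beta\supsetneq\mu$ we are forced into $\gamma=\lambda=\alpha$, and then $\delta\subseteq\mu\subsetneq\beta$ contradicts $\delta\supseteq\beta$. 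Either way the Ext vanishes, which exhausts $\prec_1$.

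The exceptionality of each $\CE^{\lambda,\mu}$ requires a little more care. The same bookkeeping with $(\alpha,\beta)=(\lambda,\mu)$ shows that the only generator of $\CT_{\lambda,\mu}$ that can contribute a nonzero Ext into $\CE^{\lambda,\mu}$ is $(\gamma,\delta)=(\lambda,\mu)$ itself, and Proposition~\ref{pr:elmmain} identifies that contribution as $\kk$ placed in degree zero. To turn this bound into the equality $\Ext^\bullet(\CE^{\lambda,\mu},\CE^{\lambda,\mu})=\kk$ I would revisit the explicit filtration on $\CE^{\lambda,\mu}$ used in the proof of Proposition~\ref{pr:elm}: its $\nu=\mu$ stratum has skew piece $\mu/\nu$ empty, so it contributes $p_*((\CU/\CW)^\lambda)\otimes(V/\CU)^{-\mu}=\CU^\lambda\otimes(V/\CU)^{-\mu}$ with multiplicity one, the last equality being the standard Bott identity $p_*\Sigma^\lambda(\CU/\CW)=\Sigma^\lambda\CU$ on the relative Grassmannian $p\colon Z\to X$. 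The iterated long exact sequences associated to the filtration would then force $\dim_\kk\Ext^\bullet(\CE^{\lambda,\mu},\CE^{\lambda,\mu})\le 1$, and the presence of $\id$ upgrades the inequality to equality concentrated in degree zero.

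The hardest step, I expect, is this multiplicity-one assertion: Lemma~\ref{lm:eeq} only records membership in $\CT_{\lambda,\mu}$, which suffices for semi-orthogonality but is strictly weaker than what exceptionality demands. Confirming that $\CU^\lambda\otimes(V/\CU)^{-\mu}$ occurs exactly once as a subquotient requires descending into the filtration of Proposition~\ref{pr:elm} and tracking which Littlewood--Richardson summands actually survive the application of $p_*$; once that multiplicity is in hand, everything else is a formal consequence of Proposition~\ref{pr:elmmain}.
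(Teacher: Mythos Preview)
Your approach is correct and coincides with the paper's: both reduce semi-orthogonality to Proposition~\ref{pr:elmmain} via the filtration of $\CE^{\lambda,\mu}$ supplied by Lemma~\ref{lm:eeq}, and the combinatorial check you carry out is exactly the paper's (with the roles of $(\alpha,\beta)$ and $(\lambda,\mu)$ swapped). For exceptionality the paper simply asserts that it ``was already proved in Proposition~\ref{pr:elmmain}'', whereas you spell out the missing step---that $\CU^\lambda\otimes(V/\CU)^{-\mu}$ occurs with multiplicity one in the filtration---and your argument for it (via the $\nu=\mu$ stratum and $p_*(\CU/\CW)^\lambda=\CU^\lambda$) is precisely what the proofs of Proposition~\ref{pr:elm} and Lemma~\ref{lm:eeq} implicitly contain; so you are being more careful here than the paper, not doing something different.
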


\begin{proof}
    Exceptionality of the bundles $\CE^{\lambda,\mu}$ was already proved in Proposition~\ref{pr:elmmain}.
    It remains to check semi-orthogonality.
    Let $(\alpha,\beta) > (\lambda,\mu)$ be a pair of elements of $\Bl_{w,h}$. Then $(\alpha,\beta) \not\prec_1 (\lambda,\mu)$.
    We wish to show that $\Ext^\bullet(\CE^{\alpha,\beta}, \CE^{\lambda,\mu}) = 0$.
    There is a filtration on $\CE^{\alpha,\beta}$ with associated quotients of the form $\CU^\tau\otimes (V/\CU)^{-\nu}$ with
    $\tau\subseteq\alpha$, $\nu\subseteq\beta$.
    Meanwhile, the condition $(\alpha,\beta)\not\prec_1 (\lambda,\mu)$ is closed under passing to subdiagrams     $\tau\subseteq\alpha$, $\nu\subseteq\beta$. It remains to apply Proposition~\ref{pr:elmmain}.
\end{proof}

\begin{corollary}
    Let $(\lambda,\mu)\in\Bl_{w,h}$. The bundles
    $\left<\CE^{\alpha,\beta} \mid \alpha\subseteq\lambda, \beta\subseteq\mu\right>$
    as objects of the equivariant derived category $D^b_\sG(X)$ form the left dual exceptional collection
    to the collection
    $\left<\CU^\alpha\otimes (V/\CU)^{-\beta} \mid \alpha\subseteq\lambda, \beta\subseteq\mu\right>$.
\end{corollary}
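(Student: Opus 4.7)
The strategy is to identify the dual exceptional collection by verifying the defining cohomological pairing. By Lemma~\ref{lm:eeq}, each bundle $\CE^{\gamma,\delta}$ lies in the triangulated subcategory generated by $\{\CU^\tau\otimes (V/\CU)^{-\nu}\}_{\tau\subseteq\gamma,\,\nu\subseteq\delta}$, so the two collections have the same cardinality and generate the same subcategory of $D^b_\sG(X)$; fullness in this common subcategory follows from Corollary~\ref{cor:elmort1} together with a length count. It therefore suffices, by the uniqueness of dual exceptional collections, to prove that for all pairs $(\alpha,\beta),(\gamma,\delta)$ in the indexing set one has
\[
\Ext^\bullet_\sG\bigl(\CU^\alpha\otimes (V/\CU)^{-\beta},\,\CE^{\gamma,\delta}\bigr)=\kk\cdot\delta_{(\alpha,\beta),(\gamma,\delta)}.
\]

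I plan to combine two complementary vanishing results. The first is a d\'evissage: the filtration of $\CE^{\gamma,\delta}$ from Lemma~\ref{lm:eeq} has subquotients of the form $\CU^\tau\otimes(V/\CU)^{-\nu}$ with $\tau\subseteq\gamma$ and $\nu\subseteq\delta$; combining with Proposition~\ref{pr:eqec} --- which says that $\Ext^\bullet_\sG(\CU^\alpha\otimes(V/\CU)^{-\beta},\CU^\tau\otimes(V/\CU)^{-\nu})$ vanishes unless $\alpha\subseteq\tau$ and $\beta\subseteq\nu$ --- the inclusions of the filtration subquotients force the displayed Ext to vanish unless $\alpha\subseteq\gamma$ and $\beta\subseteq\delta$. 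The second vanishing is a direct application of Proposition~\ref{pr:elmmain}: the same Ext vanishes unless $\alpha\supseteq\gamma$, with equality $\alpha=\gamma$ additionally forcing $\beta\supseteq\delta$.

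The two conditions are compatible only when $(\alpha,\beta)=(\gamma,\delta)$: component-wise inclusion together with the partial-order condition from Proposition~\ref{pr:elmmain} rules out any strict inequality in either coordinate. In the remaining coincident case, the explicit computation at the end of the proof of Proposition~\ref{pr:elmmain} yields $\Hom_\sG=\kk$ with higher Ext vanishing, completing the verification. I do not anticipate a substantive obstacle --- the corollary is essentially a combinatorial consequence of the preceding results --- but the main conceptual point is that Proposition~\ref{pr:elmmain}'s one-sided vanishing is by itself insufficient and must be paired with the complementary vanishing obtained from the filtration of $\CE^{\gamma,\delta}$ via Proposition~\ref{pr:eqec}.
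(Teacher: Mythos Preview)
Your proposal is correct and follows essentially the same route as the paper: both arguments verify the defining cohomological pairing by combining Proposition~\ref{pr:elmmain} (which forces $\alpha\supseteq\gamma$, and then $\beta\supseteq\delta$) with the filtration of $\CE^{\gamma,\delta}$ from Lemma~\ref{lm:eeq} together with Proposition~\ref{pr:eqec} (which forces $\alpha\subseteq\gamma$ and $\beta\subseteq\delta$). The only cosmetic difference is that the paper applies these two ingredients in the opposite order and omits your preliminary remarks about fullness and cardinality, relying directly on Gorodentsev's characterization once the pairing is established.
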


\begin{proof}
    In is enough to check that
    \[
        \Ext^i_\sG\left(\CU^\alpha\otimes(V/\CU)^{-\beta}, \CE^{\tau,\nu}\right) = \begin{cases}
            \kk, & i = 0, \alpha = \tau, \beta = \nu; \\
            0, & \text{otherwise.}
        \end{cases}
    \]
    By definition
    $\Ext^i_\sG\left(\CU^\alpha\otimes(V/\CU)^{-\beta}, \CE^{\tau,\nu}\right)
     = \Ext^i\left(\CU^\alpha\otimes(V/\CU)^{-\beta}, \CE^{\tau,\nu}\right)^\sG$.
    It follows from Proposition~\ref{pr:elmmain} that
        $\Ext^\bullet_\sG\left(\CU^\alpha\otimes(V/\CU)^{-\beta}, \CE^{\tau,\nu}\right) = 0$
    if $\tau\not\subseteq\alpha$ or $\nu\not\subseteq\alpha$.
    
    Let $\tau\subseteq\alpha, \nu\subseteq\alpha$. There is a filtration on $\CE^{\tau,\nu}$
    with associated quotients of the form $\CU^\gamma\otimes(V/\CU)^{-\delta}$ with $\gamma\subseteq\tau$
    and $\delta\subseteq\nu$. We deduce the remaining equalities from Proposition~\ref{pr:eqec}.
\end{proof}

\begin{corollary}
    Assume that the pair $(\lambda,\mu)$ belongs to \emph{some} block.
    Then the vector bundle $\CE^{\lambda,\mu}$ depends only on $\lambda$ and $\mu$,
    and not on the choice of $\Bl_{w,h}\ni (\lambda,\mu)$.
\end{corollary}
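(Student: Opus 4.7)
The plan is to deduce this from the preceding corollary combined with uniqueness of left dual exceptional collections. Suppose $(\lambda,\mu)$ belongs simultaneously to two blocks $\Bl_{w,h}$ and $\Bl_{w',h'}$; denote by $\CE_1$ and $\CE_2$ the sheaves produced by formula~\eqref{elm:def1} for these two parameter choices. The goal is to exhibit an isomorphism $\CE_1 \cong \CE_2$ of coherent sheaves on $X$.

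First I would observe that by Lemma~\ref{lm:eeq} both $\CE_1$ and $\CE_2$ carry natural equivariant structures and lie in the full triangulated subcategory
\[
    \CA = \bigl\langle \CU^\alpha \otimes (V/\CU)^{-\beta}\ \bigm|\ \alpha\subseteq\lambda,\ \beta\subseteq\mu \bigr\rangle \subset D^b_\sG(X),
\]
whose description depends only on the pair $(\lambda,\mu)$. Inside $\CA$ these equivariant bundles form a full exceptional collection by Proposition~\ref{pr:eqec}, and so $\CA$ admits a left dual exceptional collection that is unique up to equivariant isomorphism.

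Next I would invoke the preceding corollary twice. The indexing set $\{(\alpha,\beta) : \alpha \subseteq \lambda,\ \beta \subseteq \mu\}$ is contained in every block to which $(\lambda,\mu)$ itself belongs, because $\Bl_{w,h} = \You_{n-k-w,h}\times\You_{k-h,w}$ is closed under passing to subdiagrams. Applying the corollary once to $\Bl_{w,h}$ and once to $\Bl_{w',h'}$ then produces two left dual collections in $\CA$ whose maximal terms are $\CE_1$ and $\CE_2$ respectively; in particular, each $\CE_i$ satisfies
\[
    \Ext^\bullet_\sG\bigl(\CU^\alpha\otimes(V/\CU)^{-\beta},\CE_i\bigr) = \begin{cases} \kk, & (\alpha,\beta) = (\lambda,\mu), \\ 0, & \text{otherwise, for $\alpha\subseteq\lambda$, $\beta\subseteq\mu$.} \end{cases}
\]
By uniqueness of the left dual, these conditions force $\CE_1 \cong \CE_2$ in $D^b_\sG(X)$. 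Applying the forgetful functor to $D^b(X)$ yields the isomorphism we want, and Proposition~\ref{pr:elm} guarantees that both sides are genuine coherent sheaves rather than complexes, so the isomorphism in fact takes place in $\Coh(X)$.

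Since the substantive work has already been carried out in the previous corollaries, I do not expect any serious obstacle here. The only point demanding care is the downward-closure remark above, which is what ensures that both the ambient subcategory $\CA$ and the family of Ext-conditions characterising the left dual are genuinely independent of the block chosen.
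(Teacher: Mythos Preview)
Your argument is correct and follows the same line as the paper's own proof: the subcategory $\CA$ generated by $\CU^\alpha\otimes(V/\CU)^{-\beta}$ with $\alpha\subseteq\lambda$, $\beta\subseteq\mu$ is block-independent, and $\CE^{\lambda,\mu}$ is pinned down inside it by the left-dual Ext conditions. The paper's proof is simply a terser version of what you wrote; your added remarks about downward closure and about passing from $D^b_\sG(X)$ to $\Coh(X)$ are fine elaborations.
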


\begin{proof}
    The subcategory $\CT=\left<\CU^\alpha\otimes (V/\CU)^{-\beta}\mid \alpha\subseteq\lambda, \beta\subseteq\mu\right>\subseteq D^b_\sG(X)$
    does not depend on the choice of a block. Meanwhile, $\CE^{\lambda,\mu}\in\CT$ and is uniquely defined
    by the dual collection conditions.
\end{proof}

For a given element $(\lambda,\mu)\in\Bl_{w,h}$ one could repeat the entire discussion for the partial flag
variety $\Fl(k,n-w;V)$
\[
  \xymatrix{
      & \Fl(k,n-w;V) \ar[dl]_g \ar[dr]^f & \\
      \Gr(n-w,V) & & \Gr(k,V)
  }
\]
and the bundle $(\CK/\CU)^{-\mu}\otimes\CK^\lambda$, where $\CU\subset\CK$ is the universal
flag on $\Fl(k,n-w;V)$.

\begin{proposition}
    Using the notation introduced earlier,
    \begin{equation}\label{elm:def2}
        \CE^{\lambda,\mu} = f_*\left((\CK/\CU)^{-\mu}\otimes\CK^\lambda\right).
    \end{equation}
\end{proposition}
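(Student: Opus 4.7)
The plan is to show that the right-hand side of (3.10) satisfies the universal property that characterizes $\CE^{\lambda,\mu}$ in the two preceding corollaries, namely that it lies in the subcategory $\CT := \langle \CU^\alpha\otimes(V/\CU)^{-\beta} \mid \alpha\subseteq\lambda,\beta\subseteq\mu\rangle \subseteq D^b_\sG(X)$ and is left dual there to the exceptional collection of Proposition~\ref{pr:eqec}. Since left duals are unique, this will force $f_*((\CK/\CU)^{-\mu}\otimes\CK^\lambda) \cong \CE^{\lambda,\mu}$.

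The first step is a direct analog of Proposition~\ref{pr:elm} and Lemma~\ref{lm:eeq} carried out on $\Fl(k,n-w;V)$, where the short exact sequence $0\to\CU\to\CK\to\CK/\CU\to 0$ plays the role previously played by $0\to\CU/\CW\to V/\CW\to V/\CU\to 0$. Using Proposition~\ref{pr:schses}, I would filter $\CK^\lambda$ with associated graded factors $\CU^\alpha\otimes(\CK/\CU)^{\lambda/\alpha}$ for $\alpha\subseteq\lambda$. By the projection formula this reduces the computation of $R^\bullet f_*$ to that of $R^\bullet f_*\bigl((\CK/\CU)^{\lambda/\alpha}\otimes(\CK/\CU)^{-\mu}\bigr)$ on the fibers $\Gr(n-w-k,V/\CU)$ of $f$. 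A Borel--Bott--Weil calculation structurally identical to that of Proposition~\ref{pr:elm}, using the block bound $\mu_1 \leq k-h$ together with Lemma~\ref{lm:schprod}, yields vanishing of all higher direct images and identifies the composition factors of $f_*(\ldots)$ as bundles $\CU^\alpha\otimes(V/\CU)^{-\beta}$ with $\alpha\subseteq\lambda$ and $\beta\subseteq\mu$. In particular the right-hand side belongs to $\CT$.

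The second step is the verification of the dual-collection condition. By adjunction,
\[
\Ext^i_\sG\bigl(\CU^\alpha\otimes(V/\CU)^{-\beta},\ f_*((\CK/\CU)^{-\mu}\otimes\CK^\lambda)\bigr)
= H^i\bigl(\Fl(k,n-w;V),\ \CU^{-\alpha}\otimes(V/\CU)^\beta\otimes\CK^\lambda\otimes(\CK/\CU)^{-\mu}\bigr)^\sG.
\]
I would introduce the second projection $g:\Fl(k,n-w;V)\to\Gr(n-w,V)$, whose fibers are the Grassmannians $\Gr(k,\CK)$, and study the associated Leray spectral sequence. Filtering $(V/\CU)^\beta$ along the sequence $0\to\CK/\CU\to V/\CU\to V/\CK\to 0$ via Proposition~\ref{pr:schses} exhibits composition factors, and the projection formula pulls out the factors pulled back from $\Gr(n-w,V)$. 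A Borel--Bott--Weil analysis on the fibers $\Gr(k,\CK)$ mirroring Proposition~\ref{pr:elmmain}, combined with the block bound $\lambda_1\leq n-k-w$ and Lemma~\ref{lm:schprod}, shows that all but one row of the spectral sequence vanish. The surviving row identifies with $\Ext^\bullet$-groups on $\Gr(n-w,V)$ between bundles from the dual Kapranov collection there, giving vanishing unless $\alpha=\lambda$ and $\beta=\mu$, in which case a one-dimensional space of equivariant morphisms survives.

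Combining the two steps, $f_*((\CK/\CU)^{-\mu}\otimes\CK^\lambda)$ satisfies the universal property of $\CE^{\lambda,\mu}$ and must therefore be isomorphic to it. The main technical obstacle will be the Borel--Bott--Weil bookkeeping in the Leray spectral sequence for $g$: the structural argument of Proposition~\ref{pr:elmmain} transfers, but the fibers, the relevant bundles, and the decisive weight inequalities change, and one must carefully recheck that the bounds coming from $(\lambda,\mu)\in\Bl_{w,h}$ together with Lemma~\ref{lm:schprod} remain sufficient in the new setup.
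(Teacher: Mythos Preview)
Your proposal is correct and follows exactly the route the paper intends: the paper's one-line proof (``the invariant description of the resulting direct images as elements of dual exceptional collections remains the same'') is precisely the statement that the analogs of Proposition~\ref{pr:elm}, Lemma~\ref{lm:eeq}, and Proposition~\ref{pr:elmmain} go through for the flag variety $\Fl(k,n-w;V)$, and that uniqueness of the left dual then forces the two direct images to agree. One small bookkeeping slip: in the paper's convention $\Bl_{w,h}=\You_{w,h}\times\You_{k-h,n-k-w}$, so the relevant bound is $\lambda_1\leq w$ (cf.\ the inequality $w\geq\lambda_1\geq\gamma_1$ in the proof of Proposition~\ref{pr:elmmain}), not $\lambda_1\leq n-k-w$; you correctly flag this as the place requiring care.
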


\begin{proof}
    The invariant description of the resulting direct images as elements of dual exceptional collections
    remains the same.
\end{proof}

The alternative Corollary~\ref{cor:elmort1}, which uses~\eqref{elm:def2} to define our vector bundles,
proves semi-orthogonality with respect to the partial order
\[
    (\alpha,\beta)\prec_2 (\lambda,\mu)\quad \Leftrightarrow\quad \beta\supset\mu\text{ or }\beta=\mu, \alpha\supset\lambda.
\]
Thus, we get the following.

\begin{corollary}\label{cor:block}
    There is an exceptional collection in $D^b(X)$ of the form
    $\left<\CE^{\lambda,\mu}\ \mid\ (\lambda,\mu)\in\Bl_{w,h}\right>$
    with any ordering refining
    $(\alpha,\beta)\preceq (\lambda,\mu)\ \Leftrightarrow\ \beta\supseteq\mu\text{ and }\alpha\supseteq\lambda$.
\end{corollary}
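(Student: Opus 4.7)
The plan is to combine the two semi-orthogonality statements already obtained, one for each of the partial orders $\prec_1$ and $\prec_2$. Recall that Corollary~\ref{cor:elmort1}, via Proposition~\ref{pr:elmmain} applied to a filtration of $\CE^{\alpha,\beta}$ with quotients of the form $\CU^\tau\otimes(V/\CU)^{-\nu}$ (with $\tau\subseteq\alpha$, $\nu\subseteq\beta$), shows that
\[
\Ext^\bullet(\CE^{\alpha,\beta},\CE^{\lambda,\mu})\neq 0 \ \Longrightarrow\ (\alpha,\beta)\preceq_1 (\lambda,\mu),
\]
i.e. $\alpha\supseteq\lambda$, with equality forcing $\beta\supseteq\mu$. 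The alternative version, which uses the description~\eqref{elm:def2} of $\CE^{\lambda,\mu}$ as $f_*\bigl((\CK/\CU)^{-\mu}\otimes \CK^\lambda\bigr)$ and runs the analogous Borel--Bott--Weil computation on $\Fl(k,n-w;V)$, yields the symmetric conclusion $(\alpha,\beta)\preceq_2 (\lambda,\mu)$.

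The key observation is that the partial order $\preceq$ in the statement is precisely the conjunction of $\preceq_1$ and $\preceq_2$. Unwinding the definitions, either of $(\alpha,\beta)\preceq_1 (\lambda,\mu)$ and $(\alpha,\beta)\preceq_2 (\lambda,\mu)$ already forces $\alpha\supseteq\lambda$ and $\beta\supseteq\mu$ respectively, regardless of which disjunct in the definition is realised; conversely $\alpha\supseteq\lambda$ together with $\beta\supseteq\mu$ trivially imply both $\preceq_1$ and $\preceq_2$.

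Given this, the corollary follows at once. Exceptionality of each $\CE^{\lambda,\mu}$ was already established (Proposition~\ref{pr:elmmain}), so it suffices to verify semi-orthogonality. Fix a total order $<$ refining $\preceq$ and consider distinct $(\alpha,\beta)>(\lambda,\mu)$. Then $(\alpha,\beta)\not\preceq(\lambda,\mu)$, so either $\alpha\not\supseteq\lambda$ or $\beta\not\supseteq\mu$. In the first case $(\alpha,\beta)\not\preceq_1(\lambda,\mu)$ and the $\prec_1$-version of Corollary~\ref{cor:elmort1} yields $\Ext^\bullet(\CE^{\alpha,\beta},\CE^{\lambda,\mu})=0$; in the second case $(\alpha,\beta)\not\preceq_2(\lambda,\mu)$ and the alternative $\prec_2$-version gives the same vanishing. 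No genuine obstacle is present: the statement is a purely combinatorial consequence of intersecting two already-established semi-orthogonality results, made possible by the two distinct geometric realisations~\eqref{elm:def1} and~\eqref{elm:def2} of the same bundle $\CE^{\lambda,\mu}$.
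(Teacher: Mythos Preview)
Your proof is correct and follows exactly the approach the paper intends: the paper states Corollary~\ref{cor:block} immediately after observing that the alternative description~\eqref{elm:def2} yields the $\prec_2$-analogue of Corollary~\ref{cor:elmort1}, and your argument simply makes explicit that $\preceq$ is the intersection of $\preceq_1$ and $\preceq_2$, so the two semi-orthogonality statements combine to give the claim. There is nothing to add.
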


\section{Staircase complexes}
A class of exact complexes of equivariant vector bundles on Grassmannians was constructed in~\cite{Fon1}. These complexes, called staircase, are a natural generalization of the twist by $\CO(1)$ of the Koszul complex on $\PP(V)$
\[
  0\to \CO(1-n) \to \CO(2-n)\otimes V \to \CO(3-n)\otimes\Lambda^2V\to\ldots
  \to\CO(-1)\otimes\Lambda^{n-2}V\to\CO\otimes\Lambda^{n-1}V\to\CO(1)\to 0,
\]
and help express twists of exceptional objects in Kapranov's collection in terms of the collection itself.

\begin{theorem}[{\cite[Proposition~5.3]{Fon1}}]\label{stcthm1}
    Let $\lambda\in\You_{n-k,k}$ be a Young diagram of maximal width, that is $\lambda_1=n-k$. Then there is
    an exact sequence of equivariant vector bundles on $X$ of the form
    \begin{equation}\label{stc1}
            0\to \CU^\lambda \to \CU^{\dcut{\lambda}{1}}\otimes \Lambda^{\ncut{\lambda}{1}}V \to
                \CU^{\dcut{\lambda}{2}}\otimes \Lambda^{\ncut{\lambda}{2}}V \to \ldots
            \to \CU^{\dcut{\lambda}{n-k}}\otimes \Lambda^{\ncut{\lambda}{n-k}}V\to \CU^{\lambda\{1\}}(1)\to 0.
    \end{equation}
\end{theorem}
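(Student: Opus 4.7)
The plan is to realise the staircase complex \eqref{stc1} as the direct image of a (twisted) relative Koszul complex on an auxiliary partial flag variety over~$X$, and to verify exactness together with the term-by-term identification via the Borel--Bott--Weil theorem. Concretely, I would introduce a projection $p\colon Y\to X$ whose fibre is built from $V/\CU$ --- a natural candidate is the projective bundle $Y = \Fl(k,k+1;V)$ with tautological line bundle $\CL = \CK/\CU$ --- together with the Koszul resolution attached to the composition $\CL \hookrightarrow V\otimes\CO_Y$. This resolution is exact on $Y$ and may be twisted by any equivariant line bundle.

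The second step is to choose an equivariant twist $\CM_\lambda$ so that the leftmost and rightmost terms of the twisted Koszul complex push forward along $p$ to $\CU^\lambda$ and $\CU^{\lambda\{1\}}(1)$ respectively. The hypothesis $\lambda_1 = n-k$ is essential here: it is precisely the condition under which $\lambda\{1\}$, defined by the second branch of the cyclic-shift formula, remains a bona fide partition, and it simultaneously arranges the boundary BBW computations to land in cohomological degree~$0$. The combinatorial definition of the cyclic shift via binary sequences --- cycling the last symbol to the front --- matches, at the level of $\sP$-weights, the geometric operation of moving across a single wall of a Weyl chamber and back again.

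In the third step one computes $R^\bullet p_*$ termwise. For the right choice of twist, the relative BBW theorem produces at most one non-zero direct image per term, obtained by a single adjacent transposition of $\rho$-shifted weight entries. Translating this transposition through the binary-sequence description of Young diagrams recovers precisely the band cut $\dcut{\lambda}{i}$, while the external factor $\Lambda^{\ncut{\lambda}{i}}V$ materialises because $\ncut{\lambda}{i} = |\lambda/\dcut{\lambda}{i}|$ equals the amount by which the transposition shifts the weight. Since the pushforward of an exact complex of bundles all of whose higher direct images vanish is again exact, this termwise identification assembles into \eqref{stc1}.

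The main obstacle will be the combinatorial bookkeeping in step three. One must check that for every intermediate index $i$ the BBW procedure places the corresponding term in the single cohomological degree matching its Koszul position, and that the resulting Schur weight is exactly $\dcut{\lambda}{i}$ rather than something superficially similar. This is where the band-cut formalism introduced earlier in the paper earns its keep, and it is also the step where the maximal-width hypothesis $\lambda_1 = n-k$ is invoked repeatedly to rule out the degenerate BBW cases in which a $\rho$-shifted weight would have a repeated entry, or in which spurious higher cohomology would survive.
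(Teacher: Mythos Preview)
The present paper does not prove Theorem~\ref{stcthm1}; the result is quoted from~\cite{Fon1} and used as a black box to derive the generalised staircase complexes of Theorems~\ref{thm:stcl} and~\ref{thm:stcm}. There is thus no in-paper argument against which to compare your proposal.

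On its own merits, your outline has the right shape but contains concrete errors. First, on $Y=\Fl(k,k+1;V)$ there is no natural inclusion $\CL=\CK/\CU\hookrightarrow V\otimes\CO_Y$; the tautological map lands in $p^*(V/\CU)$, so the relative Koszul complex attached to it carries factors of $\Lambda^\bullet(V/\CU)$ rather than the $\Lambda^\bullet V$ required in~\eqref{stc1}, and it has only $n-k+1$ nonzero terms rather than $n-k+2$. Second, your exactness argument is internally inconsistent. You claim that each intermediate term has its unique nonzero direct image produced by ``a single adjacent transposition'' --- in Borel--Bott--Weil this means the cohomology sits in degree~$1$ --- and then immediately appeal to the principle that pushforward preserves exactness when ``all higher direct images vanish''. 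These two assertions exclude one another. In any Koszul-type construction over $Y$ the BBW degree of the surviving direct image will shift with the term index (this drift of cohomological degree against Koszul degree is exactly the ``staircase'' pattern that gives the complexes their name), and assembling the result into an exact sequence on $X$ requires a genuine spectral-sequence or truncation argument, not the vanishing principle you invoke. Until the auxiliary variety, the twist $\CM_\lambda$, and this degree bookkeeping are all made explicit, the plan remains a heuristic rather than a proof.
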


Complexes of the form~\eqref{stc1}, as well as their duals, will be called \emph{staircase.}
Using the isomorphism $\Gr(k,V)\simeq\Gr(n-k,V^*)$, which identifies the tautological subbundle on
the latter with the quotient bundle $(V/\CU)^*$ on the former, we immediately obtain staircase
complexes on $X$ of the following form.
\begin{theorem}\label{stcthm2}
    Let $\mu\in\You_{k,n-k}$ be a Young diagram of maximal width, that is $\mu_1=k$. Then there is
    an exact sequence of equivariant vector bundles on $X$ of the form
    \begin{equation*}
        \begin{split}
            0\to (V/\CU)^{-\mu} &\to \CU^{-\dcut{\mu}{1}}\otimes \Lambda^{\ncut{\mu}{1}}V^* \to
                (V/\CU)^{-\dcut{\mu}{2}}\otimes \Lambda^{\ncut{\mu}{2}}V^* \to \ldots \\
            \ldots &\to \CU^{-\dcut{\mu}{k}}\otimes \Lambda^{\ncut{\mu}{k}}V^* \to (V/\CU)^{-\mu\{1\}}(1)\to 0.
        \end{split}
    \end{equation*}
\end{theorem}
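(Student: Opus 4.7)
The plan is to deduce this theorem as an immediate corollary of Theorem~\ref{stcthm1} via the canonical isomorphism $\Gr(k,V)\simeq\Gr(n-k,V^*)$ which sends $W\subset V$ to $W^\perp\subset V^*$. Under this isomorphism, the tautological subbundle $\CU'$ on $\Gr(n-k,V^*)$ becomes identified with $(V/\CU)^*$ on $X=\Gr(k,V)$, so $(\CU')^\nu$ on the dual Grassmannian corresponds to $(V/\CU)^{-\nu}$ on $X$ for every dominant weight $\nu$; the ambient vector space $V$ appearing as a coefficient in Theorem~\ref{stcthm1} is replaced by $V^*$; and the Pl\"ucker polarization $\CO(1)$ is preserved (modulo a twist by a trivial line bundle, which does not affect exactness of the complex).

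Since $\mu\in\You_{k,n-k}$ satisfies $\mu_1=k$, it plays the role of a maximal-width Young diagram on $\Gr(n-k,V^*)$, where the roles of $k$ and $n-k$ are interchanged. The first step is therefore to apply Theorem~\ref{stcthm1} on $\Gr(n-k,V^*)$: this yields an exact sequence whose interior terms are $(\CU')^{\dcut{\mu}{i}}\otimes\Lambda^{\ncut{\mu}{i}}V^*$ and whose outer terms are $(\CU')^\mu$ and $(\CU')^{\mu\{1\}}(1)$. The second step is to transport this sequence to $X$ along the identification $(\CU')^\nu\leftrightarrow (V/\CU)^{-\nu}$, which produces the desired complex.

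The only combinatorial point requiring a check is that the band-cut operation $\mu\mapsto\dcut{\mu}{i}$, the integers $\ncut{\mu}{i}$, and the cyclic shift $\mu\{1\}$ are intrinsic to the pair $(\mu,\You_{k,n-k})$, and therefore behave naturally under the swap $k\leftrightarrow n-k$ used by the duality. This is immediate from the binary-sequence description recalled in Section~2: those operations depend only on the path associated with $\mu$ inside its ambient rectangle and are insensitive to which side of the duality one views them from. No serious obstacle is anticipated; the argument is essentially the one-line observation that the staircase complex of Theorem~\ref{stcthm1} is compatible with Grassmannian duality, which is precisely why the authors present the statement as an immediate consequence rather than redoing the construction from scratch.
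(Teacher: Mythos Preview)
Your proposal is correct and is precisely the argument the paper gives: the sentence immediately preceding Theorem~\ref{stcthm2} states that the result follows from Theorem~\ref{stcthm1} via the isomorphism $\Gr(k,V)\simeq\Gr(n-k,V^*)$ identifying the tautological subbundle on the latter with $(V/\CU)^*$, and you have spelled out exactly this transport. Your parenthetical remark about $\CO(1)$ differing only by the trivial twist $\det V$ is also correct and is the only point one might want to make explicit.
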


It turns out one can construct generalized staircase complexes for exceptional bundles $\CE^{\lambda,\mu}$
considered in the previous section by means of an easy geometric argument.
Recall that in terms of partial flag varieties $Z_1=\Fl(k-h,k;V)$ and $Z_2=\Fl(k,n-w;V)$
\[
  \xymatrix{
        & Z_2 \ar[ld]_g \ar[rd]^f &   & Z_1 \ar[ld]_p \ar[rd]^q &    \\
    X'' &                         & X &                         & X' \\
  }
\]
where $X'=\Gr(k-h,V)$ and $X''=\Gr(n-w,V)$, there are explicit formulas
\[
    \CE^{\lambda,\mu} = p_*\left((\CU/\CW)^\lambda\otimes(V/\CW)^{-\mu}\right), \qquad
    \CE^{\lambda,\mu} = f_*\left((\CK/\CU)^{-\mu}\otimes\CK^\lambda\right),
\]
where $\CW$ and $\CK$ are tautological subbundles of ranks $k-h$ and $n-w$ on $Z_1$ and $Z_2$ respectively.

\begin{theorem}\label{thm:stcl}
    For any pair $(\lambda,\mu)\in\Bl_{w,h}$ with $\lambda_1=w$, there exists an exact sequence
    on $X$ of the form
    \begin{equation}\label{eq:stcl}
            0\to\CE^{\lambda,\mu} \to\CE^{\dcut{\lambda}{1},\mu}\otimes\Lambda^{\ncut{\lambda}{1}}V\to
            \CE^{\dcut{\lambda}{2},\mu}\otimes\Lambda^{\ncut{\lambda}{2}}V\to \ldots \\
            \to \CE^{\dcut{\lambda}{w},\mu}\otimes\Lambda^{\ncut{\lambda}{w}}V\to \CE^{\lambda\{1\}, \mu(1)}(1)\to 0.
    \end{equation}
\end{theorem}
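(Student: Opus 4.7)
The plan is to derive the desired exact sequence from Theorem~\ref{stcthm1} applied on the auxiliary Grassmannian $X''=\Gr(n-w,V)$, pulled back to $Z_2=\Fl(k,n-w;V)$, tensored with $(\CK/\CU)^{-\mu}$, and then pushed down to $X$ along $f$. The alternative description~\eqref{elm:def2} will identify the push-forwards with the required $\CE^{\bullet,\bullet}$.

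First I observe that the rectangle of Young diagrams for $X''$ is $\You_{w,n-w}$ (since $\CK$ has rank $n-w$ and $V/\CK$ has rank $w$, and indeed $h<n-w$ because $h+w<k+(n-k)=n$). Our $\lambda\in\You_{w,h}$ with $\lambda_1=w$ fits in $\You_{w,n-w}$ as a diagram of maximal width by padding with zeros, and the band cuts $\dcut{\lambda}{i}$ and cyclic shift $\lambda\{1\}$ are manifestly unaffected by this padding. Applying Theorem~\ref{stcthm1} on $X''$ thus yields an exact sequence of equivariant vector bundles
\[
0\to\CK^\lambda\to\CK^{\dcut{\lambda}{1}}\otimes\Lambda^{\ncut{\lambda}{1}}V\to\ldots\to\CK^{\dcut{\lambda}{w}}\otimes\Lambda^{\ncut{\lambda}{w}}V\to\CK^{\lambda\{1\}}(1)\to 0
\]
on $X''$, where $(1)$ denotes the twist by $\CO_{X''}(1)=\det\CK^*$. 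Pulling this sequence back to $Z_2$ along $g$ and tensoring with the locally free sheaf $(\CK/\CU)^{-\mu}$ preserves exactness.

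To push forward along $f$ and obtain an exact sequence of sheaves on $X$ (rather than merely a spectral sequence), the higher direct images $R^if_*$ must vanish on every term. For the middle terms $\CK^{\dcut{\lambda}{j}}\otimes(\CK/\CU)^{-\mu}$ this is immediate from the vanishing underlying the identification~\eqref{elm:def2}, since $\dcut{\lambda}{j}\subset\lambda$ keeps the pair in $\Bl_{w,h}$; combined with the projection formula applied to the constant factor $\Lambda^{\ncut{\lambda}{j}}V$, this recovers the middle terms $\CE^{\dcut{\lambda}{j},\mu}\otimes\Lambda^{\ncut{\lambda}{j}}V$ of the target sequence.

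The main obstacle is the rightmost term: one must show $f_*\bigl(\CK^{\lambda\{1\}}(1)\otimes(\CK/\CU)^{-\mu}\bigr)=\CE^{\lambda\{1\},\mu(1)}(1)$, together with the corresponding vanishing. This reduces to a short line-bundle calculation on $Z_2$. From $0\to\CU\to\CK\to\CK/\CU\to 0$ one obtains $\det\CK^*=\det\CU^*\otimes\det(\CK/\CU)^*$, while $\det\CU^*=f^*\CO_X(1)$. Treating $\mu$ as a $\GL(\CK/\CU)$-weight of length $n-k-w$ (the convention underlying $\mu(1)=\mu+(1,\ldots,1)$), a standard $\GL$-weight identity gives $\det(\CK/\CU)^*\otimes(\CK/\CU)^{-\mu}=(\CK/\CU)^{-\mu(1)}$. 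The projection formula then yields
\[
f_*\bigl(\CK^{\lambda\{1\}}(1)\otimes(\CK/\CU)^{-\mu}\bigr)=f_*\bigl(\CK^{\lambda\{1\}}\otimes(\CK/\CU)^{-\mu(1)}\bigr)\otimes\CO_X(1).
\]
Since $(\lambda\{1\},\mu(1))\in\Bl_{w,h-1}$, the block-independence of $\CE^{\bullet,\bullet}$ together with~\eqref{elm:def2} for that block identify the right-hand side as $\CE^{\lambda\{1\},\mu(1)}(1)$, and simultaneously provide the vanishing of $R^if_*$ on this term. Assembling all the pieces produces~\eqref{eq:stcl}.
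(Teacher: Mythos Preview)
Your proof is correct and follows essentially the same route as the paper: pull back the staircase complex for $\CK^\lambda$ from $X''$ to $Z_2$, tensor with $(\CK/\CU)^{-\mu}$, push down along $f$, and identify the rightmost term via the determinant splitting $\det\CK^*=\det\CU^*\otimes\det(\CK/\CU)^*$ together with $(\lambda\{1\},\mu(1))\in\Bl_{w,h-1}$. You are slightly more explicit than the paper about why $\lambda$ sits in $\You_{w,n-w}$ with its band cuts and cyclic shift unchanged, and about the higher-direct-image vanishing needed to keep the sequence exact after pushforward, but the argument is the same.
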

\begin{proof}
Let $(\lambda,\mu)\in\Bl_{w,h}$ be such that $\lambda_1=w$. By Theorem~\ref{stcthm1}, there is an exact
sequence on $X''$ of the form
\begin{equation}\label{eq:stcelml}
    0\to \CK^\lambda \to \CK^{\dcut{\lambda}{1}}\otimes \Lambda^{\ncut{\lambda}{1}}V \to
        \CK^{\dcut{\lambda}{2}}\otimes \Lambda^{\ncut{\lambda}{2}}V \to \ldots \\
    \to \CK^{\dcut{\lambda}{w}}\otimes \Lambda^{\ncut{\lambda}{w}}V\to \CK^{\lambda\{1\}}\otimes\CO_{X''}(1)\to 0.
\end{equation}

Apply the functor $f_*\left((\CK/\CU)^{-\mu}\otimes g^*(-)\right)$ to the complex~\eqref{eq:stcelml}
and remark that all the terms, except for the rightmost one, are automatically the same as in~\eqref{eq:stcl}.
Let us deal with the last term.
\[
\begin{split}
    f_*\left((\CK/\CU)^{-\mu}\otimes g^*\left(\CK^{\lambda\{1\}}\otimes\CO_{X''}(1)\right)\right) =
    f_*\left((\CK/\CU)^{-\mu}\otimes \CK^{\lambda\{1\}}\otimes\det \CK^*\right) = \\
    = f_*\left((\CK/\CU)^{-\mu}\otimes \CK^{\lambda\{1\}}\otimes\det \CU^*\otimes\det (\CK/\CU)^*\right) = \\
    = f_*\left((\CK/\CU)^{-\mu}\otimes \CK^{\lambda\{1\}}\otimes\det (\CK/\CU)^*\right)\otimes\CO_{X}(1) = \\
    = f_*\left((\CK/\CU)^{-\mu(1)}\otimes \CK^{\lambda\{1\}}\right)\otimes\CO_{X}(1).
\end{split}
\]
Here the first equality comes by definition, the second comes from the short exact sequence
\[
    0\to (\CK/\CU)^*\to \CK^*\to \CU^*\to 0,
\]
and the third is the projection formula. Remark that the pair $\left(\lambda\{1\},\mu(1)\right)$ belongs
to the block $\Bl_{w,h-1}$. Thus,
\[
    f_*\left((\CK/\CU)^{-\mu(1)}\otimes \CK^{\lambda\{1\}}\right) = \CE^{\lambda\{1\},\mu(1)}.
\]
It remains to recall that the higher direct images vanish.
\end{proof}

If one thinks of $\CE^{\lambda,\mu}$ as the direct image
$p_*\left((\CU/\CW)^\lambda\otimes(V/\CW)^{-\mu}\right)$ and uses staircase complexes from
Theorem~\ref{stcthm2}, an analogous argument shows the following result.

\begin{theorem}\label{thm:stcm}
    For any pair $(\lambda,\mu)\in\Bl_{w,h}$, such that $\mu_1=k-h$, there is an exact sequence on $X$
    of the form
    \begin{equation}\label{eq:stcm}
            0\to\CE^{\lambda,\mu} \to\CE^{\lambda,\dcut{\mu}{1}}\otimes\Lambda^{\ncut{\mu}{1}}V^*\to
            \CE^{\lambda,\dcut{\mu}{2}}\otimes\Lambda^{\ncut{\mu}{2}}V^*\to \ldots
            \to \CE^{\lambda,\dcut{\mu}{k-h},\mu}\otimes\Lambda^{\ncut{\mu}{k-h}}V^*\to \CE^{\lambda(1), \mu\{1\}}(1)\to 0.
    \end{equation}
\end{theorem}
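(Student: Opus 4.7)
The plan is to imitate the proof of Theorem~\ref{thm:stcl}, interchanging the roles of $\lambda$ and $\mu$ and working instead with the description $\CE^{\lambda,\mu} = p_*\bigl((\CU/\CW)^\lambda \otimes (V/\CW)^{-\mu}\bigr)$ coming from the flag variety $Z_1 = \Fl(k-h,k;V)$ with projections $p\colon Z_1\to X$ and $q\colon Z_1\to X'=\Gr(k-h,V)$. The hypothesis $\mu_1 = k-h$ exactly says that $\mu$ has maximal width as a diagram on $X'$ (where $\CW$ takes the place of the tautological subbundle), so Theorem~\ref{stcthm2} applied on $X'$ yields an exact sequence
\begin{equation*}
    0 \to (V/\CW)^{-\mu} \to (V/\CW)^{-\dcut{\mu}{1}}\otimes\Lambda^{\ncut{\mu}{1}}V^* \to \cdots \to (V/\CW)^{-\dcut{\mu}{k-h}}\otimes\Lambda^{\ncut{\mu}{k-h}}V^* \to (V/\CW)^{-\mu\{1\}}\otimes\CO_{X'}(1) \to 0.
\end{equation*}

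I then pull this complex back to $Z_1$ along $q$ and tensor with the locally free sheaf $(\CU/\CW)^\lambda$, preserving exactness. Applying $p_*$ termwise, the middle terms become $\CE^{\lambda,\dcut{\mu}{i}}\otimes\Lambda^{\ncut{\mu}{i}}V^*$ by the projection formula, since $\Lambda^{\ncut{\mu}{i}}V^*$ is a trivial bundle. Because $\dcut{\mu}{i}\subseteq\mu$ the pair $(\lambda,\dcut{\mu}{i})$ still lies in $\Bl_{w,h}$, so Proposition~\ref{pr:elm} gives $R^{>0}p_*=0$ on each such term; together with analogous vanishing at the endpoints, this ensures that the pushforward complex remains exact.

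The rightmost term requires the essential computation. From $0\to\CW\to\CU\to\CU/\CW\to 0$ one gets $q^*\CO_{X'}(1) = \det\CW^* = \det\CU^*\otimes\det(\CU/\CW)$; absorbing $\det(\CU/\CW)$ into $(\CU/\CW)^\lambda$ shifts $\lambda$ to $\lambda(1)$, and pulling $\det\CU^* = \CO_X(1)$ out of $p_*$ via the projection formula identifies the pushforward with $\CE^{\lambda(1),\mu\{1\}}(1)$. A direct check gives $\lambda(1)\in\You_{n-k-w+1,h}$ and $\mu\{1\} = (\mu_2,\ldots,\mu_w,0)\in\You_{k-h,w-1}$, so $(\lambda(1),\mu\{1\})\in\Bl_{w-1,h}$, and Proposition~\ref{pr:elm} applies in this smaller block to yield the needed higher direct image vanishing. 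The main obstacle is carrying out this determinantal bookkeeping correctly and confirming that the shifted pair indeed lands in a block where Proposition~\ref{pr:elm} remains applicable, so that pushforward actually produces the indicated $\CE$-bundle on $X$ rather than some spurious object.
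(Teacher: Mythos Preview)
Your approach is exactly the one the paper indicates: it says only that using the description $\CE^{\lambda,\mu}=p_*\bigl((\CU/\CW)^\lambda\otimes(V/\CW)^{-\mu}\bigr)$ together with the staircase complexes of Theorem~\ref{stcthm2} on $X'=\Gr(k-h,V)$, ``an analogous argument'' to the proof of Theorem~\ref{thm:stcl} gives the result, and you have unpacked that argument correctly. One bookkeeping slip: in the paper's convention $\Bl_{w,h}=\You_{w,h}\times\You_{k-h,\,n-k-w}$ (so that $\lambda_1=w$ really is maximal width in Theorem~\ref{thm:stcl}), hence $\lambda(1)\in\You_{w+1,h}$, $\mu\{1\}=(\mu_2,\ldots,\mu_{n-k-w},0)\in\You_{k-h,\,n-k-w-1}$, and $(\lambda(1),\mu\{1\})\in\Bl_{w+1,h}$ rather than $\Bl_{w-1,h}$; this does not affect the argument, since Proposition~\ref{pr:elm} still applies on the same flag variety $Z_1$.
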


\section{Conjecture of Kuznetsov and Polishchuk}
Before we proceed to the proof of Conjecture~\ref{conj}, let us make a couple of
preliminary remarks. Semi-orthogonal decompositions, which form the core of the statement,
are related to some paths in a rectangle. Despite the continuous nature of the space
of paths, the conjectural set of decompositions is discrete. Recall that the components
depend on intersection points of the path $\Gamma$ with the grid
$N=\RR\times\ZZ\cup\ZZ\times\RR\subset\RR^2$.
Let $p$ be such a point. There are three principal cases: only the abscissa of $p$ is integral,
only the ordinate of $p$ is integral, or both of the coordinates of $p$ are integral.
In each of these cases the corresponding semi-orthogonal component depends only on the index
of $p$ in the ordered sequence of intersection points of $\Gamma$ with $N$ and on the block $\Bl_p$.
The block itself depends only on integral parts of the coordinates of $p$. Thus,
without loss of generality, we assume that all the paths of interest are piecewise-linear and
connect points with half-integer coordinates.

Moreover, we have already managed to prove the part of the conjecture concerning the structure of
the semi-orthogonal components.

\begin{proposition}\label{pr:blec}
    For any $p\in\Pi$, the subcategory $\CB_p\subset D^b(X)$ is generated by an exceptional collection.
\end{proposition}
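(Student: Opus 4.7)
The plan is to reduce the claim to the exceptional collections of Section~\ref{sec:elm} via the contravariant duality autoequivalence $(-)^\vee = \RCHom(-, \CO_X)$ of $D^b(X)$. On generators this functor acts by $(\CU^\alpha \otimes (V/\CU)^{-\beta})^\vee = \CU^{-\alpha} \otimes (V/\CU)^{\beta}$, and therefore carries the subcategory
\[
    \CT_p := \langle \CU^\alpha \otimes (V/\CU)^{-\beta} \mid (\alpha,\beta) \in \Bl_p \rangle \subseteq D^b(X)
\]
isomorphically onto $\CB_p$. Since a contravariant autoequivalence sends an exceptional collection $(E_1, \ldots, E_N)$ to an exceptional collection $(E_N^\vee, \ldots, E_1^\vee)$ (in reversed order), it suffices to exhibit an exceptional collection generating $\CT_p$.

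Assume first that $p = (x, y)$ is such that $\floor{x}, \floor{y}, \floor{n-k-x}, \floor{k-y}$ are all positive. Setting $w := \floor{n-k-x}$ and $h := \floor{y}$ verifies the standing hypotheses $0 < w < n-k$ and $0 < h < k$ of Section~\ref{sec:elm}. A direct inspection shows that $\Bl_p$ is a ``rectangular'' sub-block of $\Bl_{w,h}$ of the form $\{(\alpha, \beta) \in \Bl_{w,h} \mid \alpha \subseteq \lambda_p,\ \beta \subseteq \mu_p\}$, where $\lambda_p$ and $\mu_p$ are the maximal diagrams of the two respective factors of $\Bl_p$. Restricting the exceptional collection provided by Corollary~\ref{cor:block} to indices in $\Bl_p$ produces an exceptional collection $\{\CE^{\lambda, \mu}\}_{(\lambda, \mu) \in \Bl_p}$ in $D^b(X)$. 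Lemma~\ref{lm:eeq} places each $\CE^{\lambda,\mu}$ in $\CT_p$, and the dual-collection corollary preceding Corollary~\ref{cor:block} (applied with $(\lambda, \mu) = (\lambda_p, \mu_p)$) expresses each generator $\CU^\alpha \otimes (V/\CU)^{-\beta}$ as an iterated cone of $\CE^{\tau, \nu}$'s in $D^b_\sG(X)$; this identity descends verbatim to $D^b(X)$ through the triangulated forgetful functor, so the two collections generate the same subcategory $\CT_p$. Applying duality now furnishes the desired exceptional collection in $\CB_p$.

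In the remaining boundary cases, at least one of $\floor{x}, \floor{y}, \floor{n-k-x}, \floor{k-y}$ vanishes, so one factor of $\Bl_p$ collapses to the singleton $\{\emptyset\}$. Then $\CB_p$ is generated either by $\{\CU^{-\lambda}\}$ for $\lambda$ in some rectangle, or by $\{(V/\CU)^\mu\}$ for $\mu$ in some rectangle: in either case it is an exceptional sub-collection of Kapranov's collection (or its dual).

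The only mildly delicate point is the equivariant-to-non-equivariant descent of the dual-collection identity, but this is automatic because the forgetful functor $D^b_\sG(X) \to D^b(X)$ is triangulated, so any iterated-cone presentation in the equivariant category yields the same presentation in $D^b(X)$.
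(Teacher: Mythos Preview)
Your argument is correct and follows essentially the same route as the paper's proof: reduce via the duality anti-autoequivalence to the subcategory $\CB^*_p=\langle \CU^\lambda\otimes(V/\CU)^{-\mu}\mid (\lambda,\mu)\in\Bl_p\rangle$, and then identify the latter with $\langle \CE^{\lambda,\mu}\mid(\lambda,\mu)\in\Bl_p\rangle$ using the results of Section~\ref{sec:elm}. The paper compresses this into the single observation $\Bl_p=\Bl_{(\ceil{x},\floor{y})}\cap\Bl_{(\floor{x},\ceil{y})}$, whereas you spell out the downward-closure argument (via Lemma~\ref{lm:eeq} and the dual-collection corollary) and treat the boundary cases separately through Kapranov's collection; both amount to the same reasoning, and your version is arguably more careful about the standing hypotheses $0<w<n-k$, $0<h<k$ of Section~\ref{sec:elm}.
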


\begin{proof}
    Anti-autoequivalences transform exceptional collections into exceptional collections with the opposite
    order. Thus, it is sufficient to prove that the subcategory
    \[
        \CB^*_p=\left<\CU^{\lambda}\otimes (V/\CU)^{-\mu}\ \mid\ (\lambda,\mu)\in\Bl_p\right>
    \]
    is generated by a full exceptional collection. According to Corollary~\ref{cor:block},
    for any point $p$ with integral coordinates
    $\CB^*_p=\left<\CE^{\lambda,\mu} \mid (\lambda,\mu)\in\Bl_p\right>$.
    It remains to observe that $\Bl_p=\Bl_{\ceil{x},\floor{y}}\cap\Bl_{\floor{x},\ceil{y}}$.
\end{proof}

We finally turn to the proof of semi-orthogonality and fullness of the decompositions.
It will be convenient to slightly modify the statement of the conjecture.
Let $p=(x,y)$ be a point inside the rectangle $\Pi$ with non-integral coordinates.
Let $\Gamma$ be a strictly monotonous path passing through $p$ and going to $(n-k,k)$.
Denote by
$p_0,\ldots,p_{l(\gamma)}$ the points on $\Gamma$ to the right of $p$,
with at least one integral coordinate. As before, we order $p_i$ with respect to any of the coordinates.
We are going to prove the following statement.

\begin{theorem}\label{thm:kpmain}
    There is a semi-orthogonal decomposition in $D^b(X)$ of the form
    \[
        \left<\CB_{p_0},\CB_{p_1}(1),\ldots,\CB_{p_{l(\Gamma)}}(l(\Gamma))\right>=\CB_{\geq p}^\Gamma\subset D^b(X).
    \]
    Moreover, the subcategory generated by the blocks does not depend on the choice of $\Gamma$ passing
    through $p$. In the following we will denote this subcategory by $\CB_{\geq p}$.
\end{theorem}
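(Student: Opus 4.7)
The plan is to proceed by downward induction on the quantity $N(p) := l(\Gamma) + 1$, which, under the half-integer convention adopted just before the theorem, depends only on $p$. The base case $N(p) = 1$ is trivial: $p_0 = (n-k,k)$, the block is $\langle\CO_X\rangle$, and there is nothing to prove. For the inductive step, the plan is to establish semi-orthogonality and path-independence simultaneously.

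I first address path-independence. Any two paths $\Gamma_1,\Gamma_2$ through $p$ to $(n-k,k)$ can be connected by a sequence of \emph{elementary moves}, each localised at an interior lattice point $(a,b)$: the move replaces a local ``right-first'' detour, passing consecutively through $(a, b-1/2)$ and $(a+1/2, b)$, by an ``up-first'' detour through $(a-1/2, b)$ and $(a, b+1/2)$. It therefore suffices to establish the local identity
\[
    \langle\CB_{(a, b-1/2)},\,\CB_{(a+1/2, b)}(1)\rangle \;=\; \langle\CB_{(a-1/2, b)},\,\CB_{(a, b+1/2)}(1)\rangle
\]
as subcategories of $D^b(X)$. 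I plan to establish both inclusions using the staircase complexes: for each generator of one side, I apply Theorem~\ref{thm:stcl} or Theorem~\ref{thm:stcm} to an appropriate $\CE^{\lambda,\mu}$ to realise it as an iterated extension of generators of the other side. The extreme term $\CE^{\lambda\{1\},\mu(1)}(1)$ of the staircase lands in the twisted block, the middle terms $\CE^{\dcut{\lambda}{i},\mu}$ land in the untwisted block, and the fact that $\CE^{\lambda,\mu}$ depends only on the pair $(\lambda,\mu)$ and not on the ambient block (established in Section~\ref{sec:elm}) makes the reinterpretation consistent. A total-dimension count over the $\Bl_p$'s, via Pascal-type binomial identities, shows that both sides generate categories with the same number of objects, giving preliminary evidence for the equality.

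For semi-orthogonality, I pick a non-integer point $p^\star$ on $\Gamma$ lying just past $p_0$, so $N(p^\star) = N(p) - 1$. The inductive hypothesis and the path-independence above yield that $\langle\CB_{p_1}(1),\ldots,\CB_{p_{l(\Gamma)}}(l(\Gamma))\rangle$ is a semi-orthogonal decomposition equal to $\CB_{\geq p^\star}$. It then suffices to verify $\Hom^\bullet(\CB_{p_j}(j),\CB_{p_0}) = 0$ for every $j\geq 1$. After expanding in the generators $\CU^\lambda\otimes(V/\CU)^{-\mu}$ and absorbing powers of $\CO(1) = \det\CU^*$, this becomes a Borel--Bott--Weil vanishing statement of exactly the type proved in Proposition~\ref{pr:eqec}: the block constraints on $(\lambda,\mu)$ and $(\alpha,\beta)$, combined with $j\geq 1$, force the relevant weight sequence to contain a repeated entry.

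The main obstacle is the local mutation in the first step. Although dimension counts already match, constructing the explicit staircase resolution that realises the swap of blocks requires a delicate combinatorial matching between diagrams on the two sides. One must carefully track what happens when $(\lambda,\mu)$ lies on the \emph{boundary} of its block (when $\lambda_1$ or $\mu_1$ is maximal), because this is precisely where the staircase complexes of Section~4 apply. My expectation is that these resolutions, combined with block-independence of $\CE^{\lambda,\mu}$ and an induction on $|\lambda|+|\mu|$, will deliver both inclusions and thereby close the inductive step.
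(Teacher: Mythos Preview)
Your overall architecture---downward induction, path-independence via local moves, and staircase complexes for the mutations---matches the paper's, and the ``main obstacle'' you flag is not actually a problem: the local identity
\[
\langle\CB_{(a, b-1/2)},\,\CB_{(a+1/2, b)}(1)\rangle \;=\; \langle\CB_{(a-1/2, b)},\,\CB_{(a, b+1/2)}(1)\rangle
\]
does follow from a single application of Theorem~\ref{thm:stcl} or~\ref{thm:stcm} to each boundary generator, essentially as you outline. (One minor omission: your elementary moves connect only those paths that avoid interior lattice points, so you must separately treat the degeneration where two half-integer crossings collapse to a node; the paper handles this case explicitly.)

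The genuine gap is in your semi-orthogonality step. You claim that $\Hom^\bullet(\CB_{p_j}(j),\CB_{p_0})=0$ is ``a Borel--Bott--Weil vanishing statement of exactly the type proved in Proposition~\ref{pr:eqec}.'' But Proposition~\ref{pr:eqec} concerns a \emph{single} block at a \emph{single} twist; the vanishing you need is between \emph{different} blocks separated by a twist $j\geq 1$, which is the content of Lemma~\ref{lm:diagort}, and that lemma is proved only for \emph{diagonally} related integer points $p'=p+(t,t)$. For an arbitrary path the pair $(p_0,p_j)$ is not diagonal, and there is no direct pigeonhole argument yielding the repeat you assert. The paper resolves this by reversing your order: it first proves semi-orthogonality for the special diagonal path (where Lemma~\ref{lm:diagort} applies directly), and only then deduces path-independence. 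In your ordering you can still repair the argument, but you must use the path-independence of the \emph{tail} (already established by induction) to replace $\langle\CB_{p_1}(1),\ldots\rangle$ by the diagonal blocks from $p^\star$, observe that $\Bl_{p_0}$ coincides with the block at the integer point $(\lfloor x_0\rfloor,\lfloor y_0\rfloor)$, and then invoke Lemma~\ref{lm:diagort}. Without this reduction the vanishing claim is unsupported.
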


\begin{lemma}\label{lm:diagort}
    Let $p=(x,y)$ and $p'=(x+t,y+t)$ be a pair of points with integral coordinates in the rectangle
    $\Pi$, where $t>0$. Then
    \[
        \CB_p\subset \CB_{p'}(t)^\perp.
    \]
\end{lemma}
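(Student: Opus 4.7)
My plan is to translate the orthogonality into a direct Borel--Bott--Weil computation. Since $\CB_p$ is generated by $\CU^{-\lambda}\otimes(V/\CU)^\mu$ for $(\lambda,\mu)\in\Bl_p$ and $\CB_{p'}(t)$ by $\CU^{-\lambda'}\otimes(V/\CU)^{\mu'}\otimes\CO(t)$ for $(\lambda',\mu')\in\Bl_{p'}$, and using $\CO(t)=\CU^{-(t^k)}$ with $(t^k)=(t,\ldots,t)$ of length $k$, the lemma reduces to proving the vanishing of
\[
H^\bullet\bigl(X,\,\CU^{\lambda'+(t^k)}\otimes\CU^{-\lambda}\otimes(V/\CU)^\mu\otimes(V/\CU)^{-\mu'}\bigr)
\]
for every admissible pair of blocks. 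Expanding the $\CU$- and $(V/\CU)$-tensor products via the Littlewood--Richardson rule and invoking Borel--Bott--Weil, the problem reduces to showing that for every irreducible summand $\CU^\alpha\otimes(V/\CU)^\beta$ the shifted sequence
\[
S(\alpha,\beta)=(\beta_1+n,\ldots,\beta_{n-k}+k+1,\,\alpha_1+k,\ldots,\alpha_k+1)
\]
has a repeated entry. By dominance each of the two blocks of $S(\alpha,\beta)$ is strictly decreasing, so any repeat must occur \emph{between} them, i.e., I must exhibit indices $i,j$ with $\alpha_i+k-i+1=\beta_j+n-j+1$.

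The key input is Lemma~\ref{lm:schprod}, whose part~(2) yields the containments $\alpha\subseteq\lambda'+(t^k)$ when $\alpha_k\ge 0$, and $\beta\subseteq\mu$ when $\beta_{n-k}\ge 0$, with dual statements (obtained by swapping factors) when $\alpha_1\le 0$ or $\beta_1\le 0$. Writing $\alpha=\gamma+(t^k)$ for $\gamma$ a summand of $\CU^{\lambda'}\otimes\CU^{-\lambda}$, the strict positivity $t\ge 1$ rules out $\alpha_1\le 0$: this would force $\gamma_1\le -t<0$, but the dual containment for $\gamma$ gives $\gamma_1\ge -\lambda_k=0$, a contradiction; so $\alpha_1\ge t$ holds unconditionally. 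In the cleanest regime $\alpha_k\ge 0,\,\beta_{n-k}\ge 0$, combining the two containments with the block relations $\lambda_{y+1}=\mu_{x+1}=\lambda'_{y+t+1}=\mu'_{x+t+1}=0$ forces the trailing entries of $S(\alpha,\beta)$ (for $i>y+t$ and $j>x$) into $[1,k-y]$ and exactly onto $[k+1,n-x]$ respectively. Avoiding a repeat between the non-trailing $\alpha$-entries and the trailing $\beta$-entries then forces the non-trailing $\alpha$-entries to equal exactly $[k-y-t+1,k]$, which forces $\alpha_i=0$ for all $i$ and hence $\lambda'+(t^k)=\lambda$, impossible since $(\lambda'+(t^k))_k=t\ge 1$ while $\lambda_k=0$.

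The remaining mixed cases $\alpha_k<0$ and/or $\beta_{n-k}<0$ are handled by analogous arguments using the weaker double-sided bounds of Lemma~\ref{lm:schprod}(1); in those regimes the negative trailing entries of one block invade the range covered by the other block and produce the forced coincidence. The principal obstacle is the bookkeeping of this case analysis: while each case rests on an incompatibility that ultimately comes from the strict inequality $t\ge 1$, ensuring that each forced coincidence is realized by indices within the admissible ranges requires careful tracking of the Littlewood--Richardson-permissible values of $\alpha$ and $\beta$ in the mixed regimes.
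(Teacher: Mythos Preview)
Your reduction to Borel--Bott--Weil and Littlewood--Richardson bounds is exactly the paper's setup, and your ``cleanest regime'' argument ($\alpha_k\ge 0$, $\beta_{n-k}\ge 0$) is correct. The gap is that you do not actually prove the mixed cases; you only assert that ``analogous arguments'' work and concede that the bookkeeping is the principal obstacle. As written, the proof is incomplete.

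The paper avoids the case split entirely by a pigeonhole argument, and this is worth knowing. Rather than using part~(2) of Lemma~\ref{lm:schprod} (which is what forces your case division), use only the two-sided bounds of part~(1), which hold for \emph{every} Littlewood--Richardson summand. With $w=n-k-x$ and $h=y$, these bounds confine the $w$ entries of the BBW sequence with indices $n-k-w+1,\dots,n-k$ (coming from the quotient-bundle factor) and the $k-h$ entries with indices $n-k+1,\dots,n-h$ (coming from the subbundle factor, after absorbing the twist by $t$) all into the integer interval $[h+1+t,\,k+w]$. That is $w+k-h$ terms squeezed into $w+k-h-t$ slots; since $t\ge 1$, two of them coincide, and the cohomology vanishes. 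No splitting on the sign of $\alpha_k$ or $\beta_{n-k}$ is needed, and the argument is uniform over all summands.

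So: your approach can be completed, but the missing mixed cases are real work, and the paper's pigeonhole trick both fills the gap and collapses your entire case analysis into two lines.
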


\begin{proof}
    Denote $w=n-k-x$ and $h=y$. It suffices to check that for all
    \[
    (\lambda,\mu)\in\Bl_{p'}=\You_{w-t,h+t}\times\You_{k-h-t,n-k-w+t}\quad\text{and}\quad(\alpha,\beta)\in\Bl_{p}=\You_{w,h}\times\You_{k-h,n-k-w}
    \]
    one has
    \[
        \Ext^\bullet\left(\CU^{-\lambda}\otimes(V/\CU)^{\mu}(t),\CU^{-\alpha}\otimes(V/\CU)^{\beta}\right)
        = H^\bullet\left(X,\CU^{\lambda}\otimes\CU^{-\alpha}\otimes(V/\CU)^{-\mu}\otimes(V/\CU)^{\beta}(-t)\right) = 0.
    \]
    Let $\CU^\gamma$ and $(V/\CU)^\delta$ be irreducible summands of $\CU^{\lambda}\otimes\CU^{-\alpha}$
    and $(V/\CU)^{-\mu}\otimes(V/\CU)^{\beta}$ respectively. It is enough to prove that
    $H^\bullet\left(X, \CU^\gamma\otimes(V/\CU)^\delta(-t)\right) = 0$.
    By the Borel--Bott--Weil theorem the latter holds if and only if at least two terms in
    \begin{equation}\label{eq:dortsq}
        (n+\delta_1,n-1+\delta_2,\ldots,k+1+\delta_{n-k},k+t+\gamma_1,k-1+t+\gamma_2,\ldots,1+t+\gamma_k)
    \end{equation}
    coincide.
    
    The first $n-k$ and the last $k$ terms of~\eqref{eq:dortsq} form strictly decreasing subsequences.
    Consider the terms with indices $n-k-w+1,\ldots,n-k$. By Lemma~\ref{lm:schprod} and the previous remark,
    \[
        k+w+\delta_{n-k-w+1}\leq k+w\quad\text{and}\quad k+1+\delta_{n-k}\geq k+1-(k-h-t)=h+1+t.
    \]
    Thus, all the terms with given indices belong to $[h+1+t,\ldots,k+w]$.
    On the other hand, consider the terms with indices $n-k+1,\ldots,n-h$. The same arguments imply
    \[
        k+t+\gamma_1\leq k+t+w-t=k+w\quad\text{and}\quad h+1+t+\gamma_{k-h}\geq h+1+t.
    \]
    Thus, these terms belong to $[h+1+t,\ldots,k+w]$. Summarizing, $w+k-h$ terms of
    the sequence~\eqref{eq:dortsq}
    belong to an integral segment of size~$w+k-h-t$. The pigeonhole principle finishes the job.
\end{proof}

\begin{proof}[Proof of Theorem~\ref{thm:kpmain}]
    Without loss of generality one may assume that the point $p$ has coordinates of the form
    $(x-\frac{1}{2}, y-\frac{1}{2})$, where $1\leq x\leq n-k$ and $1\leq y\leq k$ are integers.
    Let us prove the statement by descending induction on the sum of the coordinates of $p$.
    For convenience, denote
    $w=n-k-x$, $h=y$ and $\CB_i=\CB_{p_i}$.
    
    \emph{Induction basis.}
    Let us show that the statement holds for the set of points with $y=k$ or $x=n-k$.
    Consider the case $y=k$. The only option for the path is to intersect vertical segments.
    
    \begin{center}
    \begin{tikzpicture}[scale=0.7]
        \draw [thick] (-1.5,2) -- (5,2);
        \draw [thick, dotted] (5,2) -- (6,2);
        \draw [thick] (6,2) -- (9,2);
        \draw [help lines] (-1.5,0) -- (5,0);
        \draw [dotted, help lines] (5,0) -- (6,0);
        \draw [help lines] (6,0) -- (9,0);
        \foreach \x in {0,2,4,7}
            \draw [help lines] (\x, 2) -- (\x, -1.5);
        \draw [thick] (9,2) -- (9,-1.5);

        \draw [line width=1.5] (-0.5, -1) to [out=20,in=225] (1,1) to [out=45,in=180] (2,1.5) to (5,1.5);
        \draw [line width=1.5, dotted] (5,1.5) to (6,1.5); 
        \draw [line width=1.5] (6,1.5) to (7,1.5) to [out=0,in=225] (9,2);

        \fill [black] (1,1) circle [radius=0.1];
        \node [below right] at (1,1) {$p$};
        \fill [black] (2,2) circle [radius=0.08];
        \node [above] at (2,2) {$(x,y)$};
        \node [above right] at (8,2) {$p_w=(n-k,k)$};
        \node [below right] at (2,1.5) {$p_0$};
        \node [below right] at (4,1.5) {$p_1$};
        \node [below right] at (7,1.5) {$p_{w-1}$};
        \foreach \x in {2,4,7}
            \fill [black] (\x, 1.5) circle [radius=0.1];
        \fill [black] (9,2) circle [radius=0.1];
        \node [left] at (-0.3,-0.7) {$\Gamma$};
    \end{tikzpicture}
    \end{center}
    
    We expect a semi-orthogonal decomposition of the form
    $\CB_{\geq p}=\left<\CB_0,\CB_1(1),\ldots,\CB_w(w)\right>$ with components
    $\CB_i(i) = \left<\CU^{-\lambda}(i) \mid \lambda\in\You_{w-i, k-1}\right>$. It is easy to see that
    \[
        \CB_{\geq p}=\left<\CU^{-\lambda} \mid \lambda\in\You_{w,k}\right>.
    \]
    In remains to observe that the occurring vector bundles form a subcollection of Kapranov's exceptional
    collection, and the block subdivision corresponds to the value of $\lambda_k$. Moreover, one has $\CB_{\geq p}=\CB_{(x,y)}$.
    
    The case $x=n-k$ is treated similarly with the only difference being that one gets a subcollection of the
    dual Kapranov's collection.
    
    \emph{Induction step.}
    Consider a special path $\Gamma$, the one that goes along the diagonal through nodes until the border,
    where it turns to the upper right corner.
    \begin{center}
    \begin{tikzpicture}[scale=0.5]
        \draw [thick] (-1,8) -- (14,8);
        \draw [thick] (14,-1) -- (14,8);
        
        \foreach \x in {0,2,4,6,8,10,12}
            \draw [help lines] (\x, -1) -- (\x, 8);
        \foreach \y in {0,2,4,6}
            \draw [help lines] (-1,\y) -- (14,\y);

        \draw [line width=1.7] (-1,-1) to (6.5,6.5) to [out=45,in=180] (8,7) to (12.5,7) to [out=0,in=225] (14,8);
        \fill [black] (1,1) circle [radius=0.2];
        \node [above left] at (1.1,0.8) {$p$};
        
        \foreach \x in {2, 4, 6}
            \fill [black] (\x,\x) circle [radius=0.15];
        \foreach \x in {8, 10, 12}
            \fill [black] (\x,7) circle [radius=0.15];
        \fill [black] (14,8) circle [radius=0.15];

        \node [below right] at (2,2) {$p_0$};
        \node [below right] at (5.8,6) {$p_{h-1}$};
        \node [right] at (14,8) {$p_{l(\Gamma)}$};

        \node [left] at (-1,-1) {$\Gamma$};
    \end{tikzpicture}
    \end{center}
    By Lemma~\ref{lm:diagort} we immediately get a semi-orthogonal decomposition
    \[
        \CB^\Gamma_{\geq p} = \left<\CB_0,\CB_1(1),\ldots,\CB_{l(\Gamma)}(l(\Gamma))\right>.
    \]
    Let $\Gamma'$ be another path. There are three different options for the point $p_0$: it either
    coincides with a node, lies on a vertical segment, or lies on a horizontal segment. Let us
    treat the cases separately.
    
    Assume $p_0$ coincides with a node, that is $p_0=(x,y)$. Without loss of generality we may assume that
    $\Gamma'$ passes through the point $p'=(x+\frac{1}{2},y+\frac{1}{2})$. Then
    \[
        \CB^\Gamma_{\geq p} = \left<\CB_0,\CB_1(1),\ldots,\CB_{l(\Gamma)}(l(\Gamma))\right> =
         \left<\CB_0,\CB^\Gamma_{\geq p'}(1)\right> = \left<\CB_0,\CB^{\Gamma'}_{\geq p'}(1)\right>,
    \]
    where the first equality holds by definition, and the second from the induction hypothesis.
    Remark that the points of $\Gamma'$ intersecting the grid and situated to the right of $p'$ are
    $p_1,\ldots,p_{l(\Gamma')}$. Expanding $\CB^{\Gamma'}_{\geq p'}(1)$, we get the desired decomposition.
    The subcategory does not change.
    
    Consider the case when $p_0$ lies on a horizontal segment of the grid (one may assume that $p_0=(x-\frac{1}{2},y)$).
    Without loss of generality one may take $\Gamma'$ passing through the point $p'$ with coordinates
    $(x-\frac{1}{2},y+\frac{1}{2})$. By inductive hypothesis $\CB^{\Gamma'}_{p'}$
    does not depend on the part of the path $\Gamma'$ that is to the right of $p'$. On the other hand, $\Bl_{p_0}\subset\Bl_{(x-1,y)}$. From Lemma~\ref{lm:diagort} we get a semi-orthogonal decomposition
    \[
        \CB^{\Gamma'}_{\geq p} = \left<\CB_{p_0},\CB^{\Gamma'}_{\geq p'}(1)\right> = 
        \left<\CB_{p_0}, \CB_{p_1}(1),\ldots, \CB_{p_{l(\Gamma')}}(l(\Gamma'))\right>.
    \]
    
    It remains to show that $\CB^{\Gamma'}_{\geq p}=\CB^{\Gamma}_{\geq p}$.
    By induction hypothesis one may assume that $\Gamma'$ passes through the node of the grid $p_1=(x,y+1)$.
    There is a semi-orthogonal decomposition
    \[
        \CB^{\Gamma'}_{\geq p}=\left<\CB_0,\CB_1(1), \CA\right>,
    \]
    where $\CB_0=\CB_{p_0}$, $\CB_1=\CB_{p_1}$, and the subcategory $\CA$ is generated by the remaining blocks.
    Consider the path $\Gamma''$, passing through the points $p'_0=(x,y)$ and $p'_1=(x+\frac{1}{2}, y+1)$.
    From what we have proven so far,
    \[
        \CB^{\Gamma}_{\geq p}=\CB^{\Gamma''}_{\geq p} = \left<\CB'_0, \CB'_1(1), \CA\right>,
    \]
    where $\CB'_0=\CB_{p'_0}$, $\CB'_1=\CB_{p'_1}$.
    Thus, it is enough to check the equality $\left<\CB_0, \CB_1(1)\right>=\left<\CB'_0,\CB'_1(1)\right>$.
    
    \begin{center}
    \begin{tikzpicture}[scale=0.5]
        \foreach \x in {0,4,8}
            \draw [help lines] (\x, -1) -- (\x, 13);
        \foreach \y in {0,4,8,12}
            \draw [help lines] (-2,\y) -- (10,\y);

        \pgfmathsetmacro{\rad}{0.2}
        
        \draw [line width=1.2] (2,0.5) to (2,5) to [out=90,in=225] (4,8) to (7.5,11.5);
        \draw [line width=1.2] (0.5,0.5) to (4,4) to [out=45,in=-90] (6,7) to (6,11.5);

        \node at (2,7) {$\Gamma'$};
        \node at (6.5,5) {$\Gamma''$};

        \fill [black] (2,2) circle [radius=\rad];
        \node [below right] at (2,2) {$p$};
        \fill [black] (2,4) circle [radius=\rad];
        \node [above left] at (2,4) {$p_0$};
        \fill [black] (4,8) circle [radius=\rad];
        \node [above left] at (4,8) {$p_1$};
        \fill [black] (6,10) circle [radius=\rad];
        \node [above left] at (6,10) {$p'$};
        \fill [black] (6,8) circle [radius=\rad];
        \node [below right] at (6,8) {$p'_1$};
        \fill [black] (4,4) circle [radius=\rad];
        \node [below right] at (4,4) {$p'_0$};
    \end{tikzpicture}
    \end{center}
    
    According to Proposition~\ref{pr:blec}, for every point $p$ the block $\CB_{p}$
    is generated by an exceptional collection
    $\CB_{p} = \left<\CF^{\lambda,\mu}\mid (\lambda,\mu)\in\Bl_{p}\right>$,
    where $\CF^{\lambda,\mu}=\left(\CE^{\lambda,\mu}\right)^*$.
    As $\Bl_{p_0}\subseteq\Bl_{p'_0}$ and $\Bl_{p'_1}\subseteq\Bl_{p_1}$,
    it remains to check that
    \[
        \CB_1(1)\subset\left<\CB'_0, \CB'_1(1)\right>\quad\text{and}\quad\CB'_0\subset\left<\CB_0, \CB_1(1)\right>.
    \]
    The difference $\Bl_{p_1}\setminus\Bl_{p'_1}$ consists of those pairs $(\lambda,\mu)$,
    such that the first row of $\lambda$ is of maximal length. It is enough to show that for such
    a pair, $\CF^{\lambda,\mu}(1)\in \left<\CB'_0, \CB'_1(1)\right>$.
    The complex dual to~\eqref{eq:stcl} and twisted by $\CO(1)$, is of the form
    \[
            0\to\CF^{\lambda\{1\},\mu(1)} \to\CF^{\dcut{\lambda}{w},\mu}(1)\otimes\Lambda^{\ncut{\lambda}{w}}V^*\to\ldots\to
            \CF^{\dcut{\lambda}{1},\mu}(1)\otimes\Lambda^{\ncut{\lambda}{1}}V^*\to\CF^{\lambda,\mu}(1)\to 0.
    \]
    Finally, remark that the leftmost term belongs to $\CB'_0$,
    while all the intermediate terms belong to $\CB'_1(1)$.
    Thus, $\CB_1(1)\subset\left<\CB'_0, \CB'_1(1)\right>$.
    Considering the same complexes as resolutions of their leftmost terms, we deduce that
    $\CB'_0\subset\left<\CB_0, \CB_1(1)\right>$.
    
    An analogous statement, with the difference that one should use complexes of the form~\eqref{eq:stcm} works in the case when
    $p_0$ lies on a~vertical segment.
\end{proof}

\begin{proof}[Proof of Conjecture~\ref{conj}]
    Without loss of generality we assume that any path of interest $\Gamma$, going from the point $(0,0)$
    to the point $(n-k,k)$ passes through $p=(\frac{1}{2},\frac{1}{2})$.
    By Theorem~\ref{thm:kpmain} and Lemma~\ref{lm:diagort} there is a semi-orthogonal decomposition
    \[
        \left<\CB_{p_0},\CB_{p_1}(1),\ldots,\CB_{p_{l(\Gamma)}}(l(\Gamma))\right>=\left<\CB_{p_0},\CB^{\Gamma}_{\geq p}(1)\right>,
    \]
    and the subcategory generated by its components does not depend on the choice of the path. Let us denote
    it by $\CC$.
    
    According to Proposition~\ref{pr:blec}, every block $\CB_{p_i}(i) = \left<\CF^{\lambda,\mu}(i)\mid (\lambda,\mu)\in\Bl_{p_i}\right>$
    is generated by an exceptional collection,
    where $\CF^{\lambda,\mu}=\left(\CE^{\lambda,\mu}\right)^*$.
    It remains to check that $\CC$ coincides with $D^b(X)$.
    
    Let us choose $\Gamma$ to be a piecewise-linear path, consisting of two segments: the first one connecting
    the points $(0,0)$ and $q=(\frac{1}{2},k-\frac{1}{2})$, the second connecting $q$ and $(n-k,k)$.
    There is a semi-orthogonal decomposition
    \[
        \CC=\left<\CB_{p_0},\CB_{p_1}(1),\ldots,\CB_{p_{n-1}}(n-1)\right> = \left<\CB_{p_0},\ldots,\CB_{p_{k-1}}(k-1),\CB^\Gamma_{\geq q}(k)\right>.
    \]
    When establishing the induction basis during the course of the proof of Theorem~\ref{thm:kpmain},
    we have shown that $\CB^\Gamma_{\geq q}=\CB_{(1,k)}$.
    Thus,
    \[
        \CC=\left<\CB_0,\ldots,\CB_{k-1}(k-1),\CB_k(k)\right>,
    \]
    where $\CB_i=\CB_{p_i}$ for $i=0,\ldots k-1$ and $\CB_k=\CB_{(1,k)}$.
    Moreover, the block $\CB_i$ is generated by vector bundles
    \[
        \CB_i=\left<\CU^{-\lambda}\mid \lambda\in\You_{n-k-1,i}\right>.
    \]
    Consider Kapranov's collection $D^b(X) = \left<\CU^{-\lambda}(k)\mid \lambda\in\You_{n-k,k}\right>$
    twisted by $\CO(k)$.
    It suffices to check that all its elements belong to $\CC$.
    By induction on $t$ we will show that $\CU^{-\lambda}(t)\in\CC$ for all $\lambda\in\You_{n-k,t}$.
    
    The basis case, $t=0$, is obvious.
    Assume that the statement holds for all $s<t$ and let $\lambda\in\You_{n-k,t}$. If $\lambda_1<n-k$,
    then $\lambda\in\You_{n-k-1,t}=\Bl_{p_t}$, and $\CU^{-\lambda}(t)\in\Bl_t(t)\subset\CC$.
    Now, assume $\lambda_1=n-k$.
    Consider the complex~\eqref{stc1}, dualized and twisted by $\CO(t)$. It is of the form
    \[
        0\to\CU^{-\lambda\{1\}}(t-1)\to\CU^{-\dcut{\lambda}{n-k}}(t)\otimes\Lambda^{\ncut{\lambda}{n-k}}V^*\to
        \ldots\to \CU^{-\dcut{\lambda}{1}}(t)\otimes\Lambda^{\ncut{\lambda}{1}}V^*\to\CU^{-\lambda}(t)\to 0.
    \]
    We have just checked that all the intermediate terms belong to $\CC$ (by definition of band cuts,
    diagrams $\dcut{\lambda}{i}$ belong to $\You_{n-k-1,t}$).
    Finally, the first term of the complex belongs to $\CC$ by the induction hypothesis, thus the last as well.
\end{proof}


\def\cprime{$'$}



\end{document}